\newtheorem{theorem}{Theorem}[section] % 1st argument is your name for it
\newtheorem{lemma}[theorem]{Lemma}     % 2nd argument is what is printed
\newtheorem{proposition}[theorem]{Proposition}
\title[Probabilistic Method and Navier-Stokes]%
{The Probabilistic Method and large initial data for generalized Navier-Stokes systems}
\author{Jean C. Cortissoz}
\subjclass{35Q30 (primary)}
\begin{document}

\begin{abstract}
In this paper we introduce a probabilistic approach to show the existence 
of initial data with arbitrarily large 
$L^2\left(\mathbb{R}^3\right)$,
$\dot{H}^{\frac{1}{2}}\left(\mathbb{R}^3\right)$ and
$\mathcal{PM}^2$-norms for which a Generalized Navier-Stokes system
generate a global regular solution. 
More precisely, we show that
from a certain family of possible large initial 
data most of them give raise to global
regular solutions to a given Generalized Navier-Stokes system.
\end{abstract}

\maketitle

\section{Introduction.}

The Navier-Stokes system in $\mathbb{R}^3$ can be written as,
\begin{equation}
\label{Navierstokes}
\left\{
\begin{array}{l}
u_t-\Delta u+u\cdot\nabla u+\nabla p=0 \quad \mbox{in} \quad \mathbb{R}^3\times \left(0,\infty\right)\\
\mbox{div}\, u=0 \\
u\left(x,0\right)=\psi\left(x\right)
\end{array}
\right.
\end{equation}
where $u=\left(u^1,u^2,u^3\right)$ 
is a vector field which represents the velocity of the fluid, and $p$ is a function that represents the pressure
of the fluid. In Fourier space, system (\ref{Navierstokes}) can be written as,
\begin{eqnarray}
\label{FourierNS0}
\hat{u}^l\left(\xi,t\right)&=&
\hat{\psi}^l\left(\xi\right)\exp\left(-\left|\xi\right|^2 t\right)\\ \notag
&&
+\int_{0}^{t}\exp\left(-\left|\xi\right|^2\left(t-s\right)\right)\int_{q\in\mathbb{R}^3}
M_{kjl}\left(\xi\right)\hat{u}^k\left(q,s\right)\hat{u}^j\left(\xi-q,s\right)\, ds.
\end{eqnarray}
and the divergence-free condition translates to
\begin{equation}
\label{divergencefree}
\xi_1\hat{u}^1\left(\xi_1,t\right)+\xi_2\hat{u}^2\left(\xi,t\right)+\xi_3\hat{u}^3\left(\xi,t\right)=0.
\end{equation}
Einstein summation convention is in use, and by this we mean that we are writing
\[
M_{kjl}\left(\xi\right)\hat{u}^k\left(q,s\right)\hat{u}^j\left(\xi-q,s\right)
:=\sum_{k,j=1,2,3} M_{kjl}\left(\xi\right)\hat{u}^k\left(q,s\right)\hat{u}^j\left(\xi-q,s\right).
\]

It is not difficult to show that when (\ref{FourierNS0}) represents the Navier Stokes system, 
\begin{equation}
\label{generalized}
\left|M_{kjl}\left(\xi\right)\right|\leq \left|\xi\right|.
\end{equation}
So, following Chemin and Gallagher in \cite{CG2}, we will say that equation (\ref{FourierNS0}) is a Generalized
Navier-Stokes system if it satisfies (\ref{generalized}).
Examples of Generalized Navier-Stokes systems are Montgomery-Smith's toy model 
for the Navier-Stokes equation (see \cite{MS}), and Gallagher and Paicu's  examples in \cite{GP}.

An important role 
in what follows is played by the pseudomeasure space $\mathcal{PM}^2$. This
space was defined by LeJan and
Sznitman in \cite{LJS} to study questions of global existence and uniqueness of
the 3D-Navier Stokes system, and were subsequently used by Cannone and Karch in \cite{CK}
to study singular solutions to the Navier-Stokes equations. $\mathcal{PM}^2$ is defined as  
\[
\mathcal{PM}^2=\left\{v\in\mathcal{S}'\left(\mathbb{R}^3\right):\,
\hat{v}\in L_{\mbox{loc}}^1\left(\mathbb{R}^3\right),\quad \left\|v\right\|_2
=\mbox{ess sup}_{\xi\in\mathbb{R}^3} \left|\xi\right|^2\left|\hat{v}\left(\xi\right)\right|<\infty
\right\}.
\] 
It is not difficult to show that $\mathcal{PM}^2$ with the norm thus defined is a Banach space.
Arnold and Sinai
define related spaces in the periodic case in \cite{Arnold}, again to study global
existence and uniqueness of
solutions to the Navier-Stokes equations. 
In general, it can be shown that if the initial condition $\psi$ is small in $\mathcal{PM}^2$, then
(\ref{FourierNS0}) has a global regular solution (see \cite{Arnold},\cite{CK}, \cite{LJS},\cite{Cortissoz2}).
By a solution to (\ref{FourierNS0}) we mean a function 
$u\left(t\right)=\left(u^1\left(t\right),u^2\left(t\right),u^3\left(t\right)\right)$,
\[
u: \left[0,T\right]\longrightarrow \left(\mathcal{PM}^2\right)^3
\]
with each component being weakly continuous (i.e., each $\hat{u}^i\left(\xi,t\right)$ is continuous in $t$), and
such that (\ref{FourierNS0}) holds. 

One fact that is appealing about the LeJan-Sznitman
spaces is that they allow the use of elementary tools to study the behavior of nonlinear parabolic systems 
(see \cite{Cortissoz3}). This is why,
inspired by the work of Chemin and Gallagher (\cite{CG1}, \cite{CG2}), and Chemin, Gallagher and Paicu
(\cite{CGP}) on the existence
of families of large initial conditions in certain homogeneous Besov spaces for which (\ref{Navierstokes}) have
global regular solutions, the initial motivation for writing this paper was to present a
method for proving
the existence of large initial conditions in $\mathcal{PM}^2$ whose $L^2\left(\mathbb{R}^3\right)$-norm
is also large and
for which (\ref{Navierstokes}) has global regular solutions, avoiding the trick of going to larger
spaces where small initial data-global regular solutions results hold. 
But, before we continue, let
us briefly discuss the
work of Chemin, Gallagher and Paicu. In \cite{CG1}, \cite{CG2} and \cite{CGP}, the authors construct families of initial data that have large
Besov $B_{\infty,\infty}^{-1}$-norm in the
case of periodic boundary conditions and large $\dot{B}^{-1}_{\infty,\infty}$ in the case of the whole space $\mathbb{R}^3$ for which
the Navier-Stokes equation have global regular solutions. The importance of the $B_{\infty,\infty}^{-1}$ and 
$\dot{B}_{\infty,\infty}^{-1}$
norms being large in these examples
is the fact that, since all
critical spaces for the Navier-Stokes equations (including $\mathcal{PM}^2$)
are continously
embedded in $B_{\infty,\infty}^{-1}\left(\mathbb{T}^3\right)$ and
$\dot{B}_{\infty,\infty}^{-1}\left(\mathbb{R}^3\right)$, this implies immediatly that there is no way to recover these results by
means of a theorem of the type ``small initial data implies global regular solution".

The question is then: granted
the results of Chemin, Gallagher and Paicu on the existence of large initial data that give raise to global regular solutions,
what about trying to show that actually in some sense, there is a plethora of such large initial conditions? 
Having this question in mind, the idea we follow in this paper goes along this line: 
we will not exhibit a family of functions for which a global regular solution exists,
but rather we will show that with a high probability in a given set there are 
such initial conditions. In this paper, we try as possible set of initial data a  
set of functions whose Fourier transform is supported in an annulus, and which should be thought
as a first approximation to the set of functions with bounded Fourier transform supported in a compact set -or
with nice decaying properties at infinity.  

Let us be more specific and state the main theorem of this paper. In order to do so,
we introduce some notation and a construction. Let 
\[
\mathcal{H}_{\kappa}=\left\{
\left(\xi_1,\xi_2,\xi_3\right):\, \left|\xi_3\right|\geq \kappa>0
\right\}, \quad \mathbb{H}_{\geq 0}
=\left\{\left(\xi_1,\xi_2,\xi_3\right):\, \xi_3\geq 0\right\},
\]
and
\[
R_{1,2}=
\left[-2,2\right)^3\setminus \left[-1,1\right)^3.
\]
Divide the set $R_{1,2}\cap \mathcal{H}_{\kappa}\cap \mathbb{H}_{\geq 0}$
into $K$ small disjoint congruent cubes of the form
$\left[h_1,h_2\right)\times\left[h_1',h_2'\right)\times\left[h_1'',h_2''\right)$, each of volume $O\left(\frac{1}{K}\right)$,
which
we shall call blocks; enumerate them as
$H_s$, $s=1,2,\dots, K$, and denote
the partition of $R_{1,2}$ into blocks by $\mathcal{P}$. 
Now, divide each block into $K^2$ disjoint congruent cubes, each of volume $O\left(\frac{1}{K^3}\right)$,
which we shall call subblocks; enumerate them as
$W_{s,p}$, $p=1,2,\dots, K^2$; furthermore, assume that $W_{s,p}\subset H_{s}$.
Extend this partition to $R_{1,2}\cap \mathcal{H}_{\kappa}$, by defining 
\[
\tilde{H}_{s}=-H_{s} \quad \mbox{and} \quad \tilde{W}_{s,p}=-W_{s,p}
\]
where, given a set $A$, the set $-A$ is defined by
\[
-A=\left\{-a\in \mathbb{R}^3:\, a\in A\right\}.
\]
Let $\left(\Gamma, \mathcal{G}, P\right)$ be a probability space, and 
\[
r_{s,p}^j:\Gamma \longrightarrow \left\{-1,1\right\},\quad j=1,2,
\]
random independent Bernoulli trials, i.e.,
\[
P\left[\gamma:\, r_{s,p}^j\left(\gamma\right)=1\right]=\frac{1}{2}=P\left[\gamma:\, r_{s,p}^j\left(\gamma\right)=-1\right].
\]
Fix complex numbers 
$\Theta^j_{K,s,p}$ such that $\left|\Theta^j_{K,s,p}\right|=\left(\log\log K\right)^{\frac{1}{4}}$.
Now, given $\gamma\in \Gamma$, define the function $\psi_{\gamma} \in \left(L^2\left(\mathbb{R}^3\right)\right)^3$ 
as follows. First, for $j=1,2$, define
\begin{equation}
\label{randomfunction}
\hat{\psi}_{\gamma}^j\left(\xi\right)=
\left\{
\begin{array}{l}
\mbox{(A)} \quad r_{s,p}^j\left(\gamma\right)\Theta^j_{K,s,p}/\left|\xi\right|^2 \quad \mbox{if} \quad \xi\in W_{s,p}\\
\\
\mbox{(B)} \quad \overline{\hat{\psi}_{\gamma}^j\left(\xi\right)} \quad \mbox{if} \quad -\xi \in W_{s,p}\\
\\
\mbox{(C)} \quad  0 \quad \mbox{otherwise}
\end{array}
\right.
\end{equation}
Recall that $\overline{z}$ represents the complex conjugate of $z$. Condition (B) is to ensure that $\psi$ is real valued.
And to ensure that the divergence-free condition is satisfied, define $\hat{\psi}^3\left(\xi\right)$ by
\[
\mbox{(D)} \quad
\hat{\psi}_{\gamma}^3\left(\xi\right)=-\frac{1}{\xi_3}\left(\xi_1\hat{\psi}_{\gamma}^1\left(\xi\right)+\xi_2\hat{\psi}_{\gamma}^2\left(\xi\right)\right)
\quad \mbox{if}\quad \xi\in R_{1,2}\cap \mathcal{H}_{\delta}
\]
and 
\[
\mbox{(E)}\quad\quad \hat{\psi}_{\gamma}^3\left(\xi\right)=0 \quad \mbox{otherwise}.
\]

Our main Theorem reads as follows.
\begin{theorem}
\label{maintheorem1}
For $K$ large enough, 
\[
P\left[\gamma:\, \psi_{\gamma}\,\, \mbox{generates a regular global solution to
(\ref{FourierNS0})}\right]\geq 1-\exp\left(-K\right).
\]
\end{theorem}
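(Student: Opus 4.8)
The plan is to solve \eqref{FourierNS0} by its Picard (power) series and to show that, although the data is large, the random signs force every term beyond the first to be small with overwhelming probability. Write the bilinear operator
\[
B(u,v)^{l}(\xi,t)=\int_{0}^{t}e^{-|\xi|^{2}(t-s)}M_{kjl}(\xi)\int_{q}\hat{u}^{k}(q,s)\hat{v}^{j}(\xi-q,s)\,dq\,ds,
\]
so that a solution is a fixed point of $u=L+B(u,u)$ with $L=e^{t\Delta}\psi_{\gamma}$, and work in $X=C_{b}\left([0,\infty);\left(\mathcal{PM}^{2}\right)^{3}\right)$ with norm $\sup_{t}\left\|u(t)\right\|_{\mathcal{PM}^2}$. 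I take for granted the bilinear estimate $\left\|B(u,v)\right\|_{X}\le C_{0}\left\|u\right\|_{X}\left\|v\right\|_{X}$ and the attendant small-data theorem quoted in the introduction. Two features of the construction will be used repeatedly: first, $\hat{\psi}_{\gamma}$ is supported in $R_{1,2}\cap\mathcal{H}_{\kappa}$, where $|\xi|\ge 1$, so that $\left\|L(t)\right\|_{\mathcal{PM}^2}\le e^{-t}\left\|\psi_{\gamma}\right\|_{\mathcal{PM}^2}$; second, $\left\|\psi_{\gamma}\right\|_{\mathcal{PM}^2}$ is of size $(\log\log K)^{1/4}$, which diverges but more slowly than any positive power of $K$.

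Expanding the fixed point gives $u=\sum_{n\ge 1}u_{n}$ with $u_{1}=L$ and $u_{n}=\sum_{a+b=n}B(u_{a},u_{b})$. Substituting $\hat{\psi}_{\gamma}^{j}(\xi)=r^{j}_{s,p}(\gamma)\Theta^{j}_{K,s,p}/|\xi|^{2}$ on each subblock, each $u_{n}(\xi,t)$ becomes, for fixed $\xi$ and $t$, a homogeneous Rademacher chaos of order $n$ in the independent signs $\{r^{j}_{s,p}\}$, whose coefficients are explicit iterated heat-and-convolution integrals. The deterministic bound $\left\|u_{n}\right\|_{X}\le (C_{1}\left\|\psi_{\gamma}\right\|_{\mathcal{PM}^2})^{n}$ diverges, so cancellation among the signs is essential. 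The model case is $n=2$: here $F:=u_{2}=B(L,L)$ and
\[
\hat{F}^{l}(\xi,t)=\sum_{(s,p),(s',p')}r^{k}_{s,p}r^{j}_{s',p'}\,c^{l}_{(s,p),(s',p')}(\xi,t),
\]
where $c^{l}$ is supported on those pairs of subblocks whose images overlap at $\xi$. For fixed $\xi$ the partition $\mathcal{P}$ forces each subblock to pair with only $O(1)$ others, so there are $O(K^{3})$ nonzero coefficients, each of size $O(|\Theta|^{2}K^{-3})$; hence the Hilbert--Schmidt norm of the coefficient array is $O(|\Theta|^{2}K^{-3/2})$ and its operator norm is $O(|\Theta|^{2}K^{-3})$. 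The Hanson--Wright inequality then gives, for fixed $(\xi,t)$, a tail $\exp(-c\,\lambda^{2}K^{3}/|\Theta|^{4})$ at level $\lambda$; the diagonal (deterministic, sign-independent) part is localized near $\xi\approx 0$ or near $\xi\approx 2\cdot(\text{block centre})$ and is controlled by the vanishing factor $|\xi|^{2}$ in the $\mathcal{PM}^2$ norm. Taking $\lambda$ a small constant and discretizing the output $\xi$ (and $t$) at the subblock scale --- a net of polynomial size in $K$, again supplied by the two-level block/subblock partition --- a union bound yields $\left\|F\right\|_{X}\le\varepsilon(K)$ with failure probability at most $e^{-2K}$, where $\varepsilon(K)$ is polynomially small in $K$. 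This is the probabilistic heart of the argument, and is where $|\Theta|=(\log\log K)^{1/4}$ enters: because $|\Theta|$ is sub-polynomial, the $K^{-3/2}$ cancellation gain beats $\left\|\psi_{\gamma}\right\|^{2}$.

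The same scheme applies to each order: hypercontractivity bounds the moments of the order-$n$ chaos by its $\ell^{2}$ coefficient norm, the tree count is $O(4^{n})$, and the exponential decay $\left\|L(s)\right\|_{\mathcal{PM}^2}\le e^{-s}\left\|\psi_{\gamma}\right\|_{\mathcal{PM}^2}$ makes the iterated time integrals summable, so that on a good event one obtains $\left\|u_{n}\right\|_{X}\le\rho^{n}$ for a fixed $\rho<1$, the decisive point being once more that the per-order cancellation gain (a negative power of $K$) dominates $(4\left\|\psi_{\gamma}\right\|_{\mathcal{PM}^2})^{n}$. Summing the per-order exceptional probabilities over $n$ keeps the total failure probability at most $e^{-K}$; on the complementary event the series $\sum_{n}u_{n}$ converges in $X$, each $\hat{u}^{i}(\xi,\cdot)$ is continuous, the limit satisfies \eqref{FourierNS0}, and parabolic smoothing upgrades it to a regular global solution. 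I expect the main obstacle to be exactly this last, uniform-in-$n$ step: the chaos tail bounds degrade with the order $n$ (the effective exponent in the subgaussian/subexponential tail worsens like $2/n$), so establishing $\left\|u_{n}\right\|_{X}\le\rho^{n}$ simultaneously for all $n$ --- and summing the exceptional probabilities while retaining the clean rate $1-e^{-K}$ --- requires the quantitative balance between the sub-polynomial growth of $|\Theta|$ and the polynomial cancellation gain to be tracked carefully across every order, rather than only for the forcing term $F=B(L,L)$.
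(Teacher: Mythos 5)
Your order-$2$ analysis is sound and aligns, in different clothing, with what the paper actually proves probabilistically (Proposition \ref{probabilisticestimate1}): for fixed $\xi$ the quadratic expression in the data is an (almost decoupled) Rademacher chaos with $O\left(K^{3}\right)$ coefficients of size $O\left(|\Theta|^{2}K^{-3}\right)$, and a Hoeffding/Hanson--Wright bound plus a union bound and a Fubini--Chebyshev argument in $\xi$ gives smallness outside an event of probability $\exp\left(-cK^{2\delta}\right)$. But your architecture is genuinely different from the paper's, and the difference is exactly where the gap sits. The paper never needs chaos of order $\geq 3$: its deterministic half (Section 2, culminating in Theorem \ref{lit}) shows that smallness of the order-$2$ block sums \emph{at time zero}, outside a bad frequency set of density $\leq K^{-1/4}$, already suffices. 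The time-stepping scheme with the good/bad frequency decomposition propagates all bounds for a time of order $\left(\log\log K\right)^{\frac{1}{4}}$, accruing only a sub-polynomial factor $\exp\left(C\left(\log\log K\right)^{\frac{5}{2}}\right)$, by which time the good modes have decayed, $\left\|u\left(T_{0}\right)\right\|_{\mathcal{PM}^2}<\epsilon$, and the small-data theory of \cite{CK} finishes. Your plan instead demands a probabilistic gain at \emph{every} order of the Picard expansion, and that step --- which you yourself flag as the main obstacle --- is left unproved, and is not a routine extension of the $n=2$ case.

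Concretely, two things go wrong for $n\geq 3$. First, there is no deterministic fallback at any stage: the cross term $B\left(L,u_{n-1}\right)$ carries the Lipschitz constant $C_{0}\left\|L\right\|_{X}\sim\left(\log\log K\right)^{\frac{1}{4}}\rightarrow\infty$, so once finitely many orders are controlled you cannot close the tail of the series (or a remainder fixed point) without randomness; cancellation is needed for all $n$ simultaneously, and the coefficient bounds $\sigma_{n}\lesssim C^{n}|\Theta|^{n}K^{-3\left(n-1\right)/2}$ that you invoke must themselves be established in the presence of recurring signs (the same $r_{s,p}^{j}$ appears in every factor), of the reality constraint (B) coupling $r_{s,p}^{j}$ on $W_{s,p}$ and $-W_{s,p}$, and of contracted (diagonal) terms whose measure-suppression has to be verified order by order. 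Second, the union-bound arithmetic fails at large $n$: an order-$n$ hypercontractive tail $\exp\left(-c\left(t/\sigma_{n}\right)^{2/n}\right)$ forces $t\gtrsim\sigma_{n}\left(K+n\right)^{n/2}$ if the per-order failure probabilities are to sum to $e^{-K}$ over infinitely many orders; this is harmless for $n$ up to a power of $K$, but once $n\gtrsim K^{3}/|\Theta|^{2}$ the factor $n^{n/2}$ overwhelms $K^{-3\left(n-1\right)/2}$ and the resulting bound on $\left\|u_{n}\right\|_{X}$ diverges instead of being $\leq\rho^{n}$. So as written the proposal proves the part the paper also proves (order $2$) and defers exactly the hard part, lacking the mechanism --- the paper's deterministic propagation scheme --- that makes that hard part unnecessary.
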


Notice that by definition, for $\xi\in R_{1,2}\cap \mathcal{H}_{\kappa}$, the $\psi_{\gamma}$'s satisfy an estimate
\[
C_1\left(\log\log K\right)^{\frac{1}{4}}\leq\left|\psi_{\gamma}^j\left(\xi\right)\right|
\leq C_2\left(\log\log K\right)^{\frac{1}{4}},\quad j=1,2,3,\quad C_1>0, 
\]
and hence, it is clear that
that as $K\rightarrow \infty$, the $L^2\left(\mathbb{R}^3\right)$, $\dot{H}^{\frac{1}{2}}\left(\mathbb{R}^3\right)$,
and $\mathcal{PM}^2$-norms 
of the elements of $\psi_{\gamma}$ go to $\infty$ as $\left(\log\log K\right)^{\frac{1}{4}}$,
so Theorem \ref{maintheorem1} implies the existence of large initial conditions in
the aforementioned spaces that give raise to global regular solutions to system (\ref{FourierNS0}). 
Theorem \ref{maintheorem1} can be related to a result of Cannone
 (``lemme remarquable": Lemma 3.3.8 in \cite{Cannone})
 which basically says that large initial data in 
$L^3\left(\mathbb{R}^3\right)$, and in particular 
in $\dot{H}^{\frac{1}{2}}\left(\mathbb{R}^3\right)$, that are highly oscillatory produce global regular solutions to the 
Navier-Stokes system: notice that a typical $\psi_{\gamma}$ has a strongly oscillating Fourier transform 
(this was pointed out by the referee).

Perhaps an important remark is in place here.
The choice of the set $R_{1,2}$ and the way of dividing it into congruent cubic blocks and subblocks, and the
choice of Bernoulli trials may seem quite
special to the reader. However, it can be seen from the proof of the main result, that we can go a little further in 
how general we can
make these choices. We discuss this issue briefly in the last section of this paper.

It is time to describe the two step strategy we will follow 
to prove Theorem \ref{maintheorem1}. In Section 2, we will show that for $\psi$ 
taken from a set of initial conditions if the nonlinear term
\begin{equation}
\label{nonlinearity}
\int_{q\in \mathbb{R}^3} M_{kjl}\left(\xi\right)\hat{\psi}^k\left(q\right)\hat{\psi}^j\left(\xi-q\right)\,d^3q
\end{equation}
is small then (\ref{FourierNS0}), with $\psi$ as initial condition, has a global
regular solution: this is the content of Section 2 
(similar ideas were used in \cite{CG1,CG2,CGP}). Observe 
that for the given $\psi_{\gamma}$'s, in a worst case scenario,
the nonlinear term is of order $\left(\log\log K\right)^{\frac{1}{2}}$, which 
is very large, so it is not obvious that there are initial conditions for 
which the nonlinear term is small.
Finally, in Section 3 we show that for the $\psi_{\gamma}$'s as defined above, for most $\xi\in R_{1,2}$, with 
high probability (\ref{nonlinearity}) is small. 
This method
of proving the existence of objects with certain properties should be reminiscent of Erd\"os' probabilistic method 
used in combinatorics, and hence the title of this paper. 
This method is also elementary in nature: no advanced knowledge
on Fourier or nonlinear analysis, or on functional inequalities is required. Also, and this is our hope,
this paper is a first step towards proving a theorem stating that ``generic initial data with
finite total kinetic energy generates a global regular solution to the Navier-Stokes equation".

The techniques proposed in this paper have been used with a slightly different flavor (see \cite{Deng} for
an application of similar methods to the Navier-Stokes equation, and \cite{Burq1}
and \cite{Burq2} for an application to the supercritical wave equation), 
to show that in certain spaces where no small initial data-
global regular solution or well-posedness results can be proved, for certain big subsets
of the space (in a probabilistic sense) existence of solutions does occur.

The author wants to express his gratitude to Professor M. Cannone for sending a copy of his book 
``Ondelettes, paraproduits et Navier-Stokes" upon request, to Jaime D\'avila and Guillermo
Rodr\'iguez-Blanco for discussing some aspects and results of this paper; to the referee for many
valuable comments that helped improving the exposition, specially of the probabilistic arguments, 
in this paper.

\section{Small nonlinear term implies global regular solution}

As announced in the introduction, the purpose of this section is to show that given $\psi\in \mathcal{I}_{K}$, if 
the nonlinear term
\[
\int_{q\in \mathbb{R}^3} M_{kjl}\left(\xi\right)\hat{\psi}^k\left(q\right)\hat{\psi}^j\left(\xi-q\right)\,d^3q
\]
is small enough, then (\ref{FourierNS0}) with initial condition $\psi$ has a global regular solution.

We have divided this section into four parts.
In the first part we introduce an scheme to produce solutions to (\ref{FourierNS0}); this method is inspired by a
delay device method used to prove existence of solutions to semilinear parabolic problems (see for instance Hamilton's
original proof of short time existence for the Ricci flow in \cite{Ham}).
In the second part, we present some important notation and definitions. 
Then, in the third part we
prove a few computational lemmas which are very useful in proving the
estimates in part four, where we finally reach the goal of the section -which is given in
its title.

\subsection{An iteration scheme.}

To study the existence and the behavior of solutions to (\ref{FourierNS0}) 
we use the following device which is very convenient
to our purposes. First we fix a time;
$T$, then we fix a small step size $\rho = \frac{1}{N}$ for $N$ very large, and define
$\tau_m=m\rho T$. We can construct solutions
to (\ref{FourierNS0}), by using the following scheme,
\begin{equation}
\label{FourierNS1}
\begin{array}{c}
\hat{u}_{\rho}^l\left(\xi,t\right)=
\hat{\psi}^l\left(\xi\right)\exp\left(-\left|\xi\right|^2 t\right)+\\ 
\int_{0}^{t-\rho T}\exp\left(-\left|\xi\right|^2\left(t-s\right)\right)\int_{q\in\mathbb{R}^3}
M_{kjl}\left(\xi\right)\hat{u}_{\rho}^i\left(q,s\right)\hat{u}_{\rho}^j\left(\xi-q,s\right)\,d^3q \,\, ds,
\end{array}
\end{equation}
and we use the following convention: if $t\leq 0$ then $\hat{u}^j\left(\xi,t\right)=\hat{\psi}^j\left(\xi\right)$.
To better see how this scheme can be used to find a solution to (\ref{FourierNS0}), we
use the equivalent formulation,
\begin{equation}
\label{FourierNS2}
\begin{array}{c}
\hat{u}_{\rho}^l\left(\xi,t\right)=
\hat{u}_{\rho}^l\left(\xi,\tau_n\right)\exp\left(-\left|\xi\right|^2 \left(t-\tau_n\right)\right)+\\ 
\int_{\tau_{n}}^{t-\rho T}\exp\left(-\left|\xi\right|^2\left(t-s\right)\right)\int_{q\in\mathbb{R}^3}
M_{kjl}\left(\xi\right)\hat{u}_{\rho}^k\left(q,s\right)
\hat{u}_{\rho}^j\left(\xi-q,s\right)\,d^3q\, ds,
\end{array}
\end{equation}
$t\in\left[\tau_n,\tau_{n+1}\right)$,
and we still are under the same convention: if $t\leq 0$ then $\hat{u}^j\left(\xi,t\right)=\hat{\psi}^j\left(\xi\right)$.
We hope it is now clear how to produce approximate solutions to (\ref{FourierNS0}) using
(\ref{FourierNS2}): once we have produced a solution to (\ref{FourierNS2}) on $\left[0,\tau_n\right)$,  we
use this information to extend the solution to $\left[\tau_n,\tau_{n+1}\right)$.
Also, this second formulation will allow us to have good control on $u^k_{\rho}$. 

\subsection{Important definitions, conventions and more notation.}
\label{notation}
To make our writing a bit less cumbersome, let us define a number $\omega$ such that
\[
\omega=\frac{1}{2^J}, \quad \mbox{where $J$ is a positive integer such that}\quad
\frac{1}{2}\frac{1}{K^{\frac{1}{8}}}\leq \omega <\frac{1}{K^{\frac{1}{8}}}.
\]
From now on we fix
$T=3\left(\log\log K\right)$, so $\tau_n=3\left(\log\log K\right)n\rho$, and let
\[
R_{m_1,m_2}=\left[-m_2,m_2\right)^3\setminus \left[-m_1,m_1\right)^3.
\]
Define a family of {\it good} sets
for $n=0,1,2,\dots,N$,
\[
\begin{array}{c}
\mathcal{E}_n=\\
\left\{
\xi\in R_{\omega,8}: 
\begin{array}{c}
\sum_{H\in \mathcal{P}} 
\left|\int_{q\in H}
M_{kjl}\left(\xi\right) \hat{u}^k_{\rho}\left(q,t\right)\hat{u}^j_{\rho}\left(\xi-q,t\right)\,d^3q\right|\leq E_{n}\\
\\
\quad \mbox{for all}\quad t\in\left[\tau_{n-1},\tau_n\right),\quad \mbox{and for all} \quad
k,j,l=1,2,3
\end{array}
\right\}
\end{array}
\]
where $E_0=\frac{1}{K^{\frac{1}{4}}}$.  
It also is worth noticing that
\[
\left|\int M_{kjl}\left(\xi\right) \hat{u}_{\rho}^k\left(q,t\right)\hat{u}_{\rho}^j\left(\xi-q,t\right)\right|\leq 
\sum_{H\in \mathcal{P}}\left|\int_{q\in H}M_{kjl}\left(\xi\right) \hat{u}_{\rho}^k\left(q,t\right)
\hat{u}_{\rho}^j\left(\xi-q,t\right)\,d^3 q\right|,
\]
and hence $\mathcal{E}_n$ is a set of frequencies for which the nonlinear term is small (of course as small
as $E_n$ dictates).
Indeed, even though 
the meaning of $E_n$ will be disclosed later, let us give an idea on what to expect: we will set
$E_0=\frac{1}{K^{\frac{1}{4}}}$ and then show
that $E_n\leq\frac {\exp\left(C\left(\log\log K\right)^{\frac{5}{2}}\right)}{K^{\frac{1}{14}}}$ holds for all $n$. 

Given the good set, we define the bad set as its complement,
i.e.,
\[
\mathcal{B}_n=R_{\omega,8}\setminus \mathcal{E}_n.
\] 
The reader will soon notice that $\mathcal{B}_n\subset \mathcal{B}_0$. The reason for this will be apparent 
from the proof of Lemma \ref{nonzerodensity}: $E_{n+1}$ is defined in terms of $E_n$, in such a way that
if $\xi \notin \mathcal{B}_{n}$ then $\xi \notin \mathcal{B}_{n+1}$.

Given a set $G\subset R_{\omega,8}$,
define a family of densities $\delta_j\left(G\right)$ as,
\[
\delta_{j}\left(G\right)=
\frac{\mu\left(G\cap R_{2^j\omega,2^{j+1}\omega}\right)}{\mu\left(R_{2^j\omega,2^{j+1}\omega}\right)},
\quad j=0,1,\dots J+3.
\]
where $\mu\left(A\right)$ represents the
Lebesgue measure of $A$.
In the case that $G=\mathcal{B}_n$ we employ the notation $\delta_{j,n}=\delta_j\left(\mathcal{B}_n\right)$.

Finally, we assume that the following bounds hold up to time $t=\tau_n$, $l=1,2,3$,
\begin{equation}
\label{bound1}
\left|\hat{u}_{\rho}^l\left(\xi,t\right)\right|\leq \frac{A_n}{\left|\xi\right|^{2}} 
\quad
\mbox{if} \quad\xi\in R_{1,2},\quad \xi\notin \mathcal{B}_n
\end{equation}

\begin{equation}
\label{bound2}
\left|\hat{u}^l_{\rho}\left(\xi,t\right)\right|
\leq \frac{B_n}{\left|\xi\right|^2} \quad \mbox{if} \quad \xi \in \mathcal{B}_n,
\end{equation}

\begin{equation}
\label{bound3}
\left|\hat{u}_{\rho}^l\left(\xi,t\right)\right|\leq \frac{a_n}{\left|\xi\right|^{2}} 
\quad \mbox{if}\quad \xi \in R_{0,\omega}, \,\xi\neq \left(0,0,0\right),
\end{equation}

\begin{equation}
\label{bound4}
\left|\hat{u}_{\rho}^l\left(\xi,t\right)\right|\leq \frac{b_n}{\left|\xi\right|^{2}} 
\quad \mbox{if} \quad \xi \in R_{\omega,1}\quad
\mbox{or} \quad \xi \in R_{2,\log\log K}, \quad \xi\notin \mathcal{B}_n.
\end{equation}

\begin{equation}
\label{bound5}
\left|\hat{u}_{\rho}^l\left(\xi,t\right)\right|\leq \frac{c_{n}}{\left|\xi\right|^{2}} \quad 
\mbox{if}\quad \xi \notin R_{0,\log \log K},
\end{equation}
and
\begin{equation}
\label{bound6}
\sum_{H\in \mathcal{P}} 
\left|\int_{q\in H}
M_{kjl}\left(\xi\right) \hat{u}_{\rho}^k\left(q,t\right)\hat{u}_{\rho}^j\left(\xi-q,t\right)\,d^3 q\right|
\leq E_{n}
\quad \mbox{if} \quad \xi \notin \mathcal{B}_n .
\end{equation}

\subsection{important computational lemmas}
In this section we present a sequence of lemmas that are helpful to estimate the nonlinear term.
From now on, by $h=O\left(g\right)$ it is meant that $h\leq C\cdot g$ for a constant
$C$ independent of $K$.
\begin{lemma}
\label{computation1}
Let $G\subset R_{\omega,8}$, and let $\delta_{j}\left(G\right)$ be its family of
densities. Assume that $\delta_{j} \leq \sigma$ for all $j$. Then there exists a universal
constant $c$ such that 
\[
\int_{q \in G}\frac{1}{\left|q\right|^2}\frac{1}{\left|\xi-q\right|^2}\,d^3 q\leq 
\frac{c\sigma^{\frac{1}{3}}}{\left|\xi\right|}
\]
\end{lemma}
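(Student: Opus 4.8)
The plan is to exploit the fact that, were the integral taken over all of $\mathbb{R}^3$, a scaling computation would give $\int_{\mathbb{R}^3}|q|^{-2}|\xi-q|^{-2}\,d^3q=C/|\xi|$; the density hypothesis $\delta_j(G)\leq\sigma$ should then improve this by a factor $\sigma^{1/3}$. To locate where that factor comes from I would split the domain according to the size of $|q|$ relative to $|\xi|$, writing $G=G_1\cup G_2\cup G_3$ with $G_1=G\cap\{|q|\leq|\xi|/2\}$, $G_2=G\cap\{|\xi|/2<|q|<2|\xi|\}$ and $G_3=G\cap\{|q|\geq 2|\xi|\}$. The point of this splitting is that on $G_1$ and $G_3$ the two singularities are well separated, whereas all the difficulty, namely the singularity of $|\xi-q|^{-2}$ at $q=\xi$, is concentrated in $G_2$.

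On $G_1$ one has $|\xi-q|\geq|\xi|/2$, so $|\xi-q|^{-2}\leq 4|\xi|^{-2}$ and it remains to estimate $\int_{G_1}|q|^{-2}\,d^3q$. Here I would sum over the dyadic annuli $R_{2^j\omega,2^{j+1}\omega}$ that cover $R_{\omega,8}$: on such an annulus $|q|$ is comparable to $\rho_j:=2^j\omega$, while the density hypothesis gives $\mu(G\cap R_{2^j\omega,2^{j+1}\omega})\leq\sigma\,\mu(R_{2^j\omega,2^{j+1}\omega})\leq C\sigma\rho_j^3$; hence the contribution of each annulus is at most $C\sigma\rho_j$, and the geometric sum over $\rho_j\leq|\xi|$ is dominated by its largest term $C\sigma|\xi|$, giving $\int_{G_1}|q|^{-2}|\xi-q|^{-2}\,d^3q\leq C\sigma/|\xi|$. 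The region $G_3$ is symmetric: there $|\xi-q|\geq|q|/2$, so the integrand is at most $4|q|^{-4}$, each annulus contributes $C\sigma\rho_j^{-1}$, and the geometric sum over $\rho_j\geq|\xi|$ is dominated by its smallest term, again giving $C\sigma/|\xi|$. Since $\sigma\leq\sigma^{1/3}$, both regions fall comfortably within the claimed bound.

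The crux is $G_2$. There $|q|$ is comparable to $|\xi|$, so $|q|^{-2}\leq 4|\xi|^{-2}$ and the task reduces to bounding $\int_{G_2}|\xi-q|^{-2}\,d^3q$. Because $G_2$ meets only boundedly many of the dyadic annuli $R_{2^j\omega,2^{j+1}\omega}$, namely those with $\rho_j$ comparable to $|\xi|$, the density hypothesis yields the measure bound $\mu(G_2)\leq C\sigma|\xi|^3$. Now I would invoke a \emph{bathtub} (equimeasurable rearrangement) argument: the function $q\mapsto|\xi-q|^{-2}$ is radially decreasing about $\xi$ and its superlevel sets are the balls $B(\xi,r)$, so among all sets of a fixed measure $m$ the integral of $|\xi-q|^{-2}$ is largest over the ball $B(\xi,r)$ with $\mu(B(\xi,r))=m$. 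Taking $m=\mu(G_2)$ and using $\int_{B(\xi,r)}|\xi-q|^{-2}\,d^3q=4\pi r$ together with $\frac{4}{3}\pi r^3=m\leq C\sigma|\xi|^3$, that is $r\leq C'\sigma^{1/3}|\xi|$, I obtain $\int_{G_2}|\xi-q|^{-2}\,d^3q\leq C\sigma^{1/3}|\xi|$ and hence $\int_{G_2}|q|^{-2}|\xi-q|^{-2}\,d^3q\leq C\sigma^{1/3}/|\xi|$. Combining the three regions gives the lemma with a universal constant. The step I expect to require the most care is precisely this rearrangement estimate on $G_2$: one must check that the exponent $\tfrac13$, rather than the power $1$ obtained trivially away from the singularity, is forced by the three-dimensional volume of the ball carrying the singularity, and verify that the per-annulus density bound really does control $\mu(G_2)$ when the shell $\{|\xi|/2<|q|<2|\xi|\}$ straddles two or three dyadic annuli.
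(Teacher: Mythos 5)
Your proof is correct and follows essentially the same route as the paper's: the same decomposition according to the size of $|q|$ relative to $|\xi|$, the same density-plus-dyadic-annuli estimates (with geometric sums) for the regions away from the singularity, and the same key idea for the singular region near $q=\xi$ — your bathtub/rearrangement step is exactly the paper's ``worst case scenario'' argument, where the extremal set is a ball carrying measure $O\left(\sigma|\xi|^3\right)$ and hence radius $O\left(\sigma^{\frac{1}{3}}|\xi|\right)$, producing the exponent $\tfrac{1}{3}$. If anything, your write-up of that step is slightly more careful than the paper's, which states the relevant volume as $O\left(\sigma\omega^3\right)$ where $O\left(\sigma|\xi|^3\right)$ is meant.
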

\begin{proof}
First assume 
$\left|\xi\right|<\frac{\omega}{2}$. By the triangular inequality,
\[
\left|\xi-q\right|\geq \frac{\left|q\right|}{2}.
\]
Then,
\begin{eqnarray*}
\int_{q\in G}\frac{1}{\left|q\right|^2}\frac{1}{\left|\xi-q\right|^2}\,d^3 q
&\leq& 4 \int_{q \in G} \frac{1}{\left|q\right|^4}
=4\sum_{i}\int_{2^i\omega\leq \left|q\right|<2^{i+1}\omega}\frac{1}{\left|q\right|^4}\,d^3 q\\
&\leq& \sum_i\frac{4D\sigma \left(2^i \omega\right)^3}{\left(2^i\omega\right)^4}
\leq \frac{4D\sigma}{\omega}\leq \frac{4D\sigma}{\left|\xi\right|},
\end{eqnarray*}
where $D$ is a constant independent of $K$. From now on in this
proof $D$ will indicate a constant independent of $K$ that may change from line to line.

If $\left|\xi\right|\geq \frac{\omega}{2}$ we split 
\[
\int_{q\in G}\frac{1}{\left|q\right|^2}\frac{1}{\left|\xi-q\right|^2}\,d^3 q= I + II+III,
\]
and then compute
\begin{eqnarray*}
I= \int_{q\in G, 0<\left|q-\xi\right|<\frac{\left|\xi\right|}{2}}
\frac{1}{\left|q\right|^2}\frac{1}{\left|\xi-q\right|^2} \,d^3 q&\leq&
\frac{4}{\left|\xi\right|^2}\int_{q\in G, 0<\left|q-\xi\right|<\frac{\left|\xi\right|}{2}}
\frac{1}{\left|\xi-q\right|^2} \, d^3 q\\
&\leq& 
\frac{4D}{\left|\xi\right|^2}\sigma^{\frac{1}{3}}\left|\xi\right|=\frac{4D\sigma^{\frac{1}{3}}}{\left|\xi\right|}.
\end{eqnarray*}
To estimate the previous integral we took into consideration a worst case scenario: integral I is the largest 
possible if $G$ is contained in a ball centered at the origin whose volume is equal to the volume of $G$ wich
is $O\left(\sigma \omega^3\right)$. Therefore if $G$ is contained in such a ball, its radius would be 
$O\left(\sigma^{\frac{1}{3}}\omega\right)$, and the result follows.

Let $s\geq -1$ be such that 
$2^s\omega\leq \left|\xi\right|<2^{s+1}\omega$. Then we have,
\begin{eqnarray*}
&II=&\\
&\int_{q\in G, \omega\leq\left|q\right|<\frac{\left|\xi\right|}{2},
\frac{\left|\xi\right|}{2}\leq\left|\xi-q\right|<2\left|\xi\right|}\frac{1}{\left|q\right|^2}
\frac{1}{\left|\xi-q\right|^2}\,d^3q&\\
&\leq&\\
&\frac{4}{\left|\xi\right|^2}\int_{q\in G, \omega\leq\left|q\right|<\frac{\left|\xi\right|}{2}}
\frac{1}{\left|q\right|^2}\,d^3 q&\\
&\leq&\\
&\frac{4}{\left|\xi\right|^2}\sum_{i=0}^{s-1}\int_{2^i\omega\leq\left|q\right|<2^{i+1}\omega}
\frac{1}{\left|q\right|^2}\,d^3 q&\\
&\leq&\\
&\frac{4D}{\left|\xi\right|^2}\sum_{i=0}^{s-1}\frac{1}{2^{2i}\omega^2}\sigma 2^{3i}\omega^3&\\
&\leq&\\
&\frac{4D\sigma}{\left|\xi\right|^2}2\cdot 2^{s}\omega\leq \frac{4D\sigma}{\left|\xi\right|}.&
\end{eqnarray*}
Notice that if $s\leq 0$ then $II=0$.
Finally,
\begin{eqnarray*}
\int_{q\in G,\left|q\right|\geq\frac{\left|\xi\right|}{2},
\frac{\left|\xi\right|}{2}\leq\left|\xi-q\right|<2\left|\xi\right|}\frac{1}{\left|q\right|^2}
\frac{1}{\left|\xi-q\right|^2}\,d^3 q&\leq&
\int_{q\in G,\frac{\left|\xi\right|}{2}\leq q<\frac{5\left|\xi\right|}{2}}\frac{4}{\left|q\right|^4}\,d^3 q\leq 
\frac{D\sigma}{\left|\xi\right|},
\end{eqnarray*}
and since $\sigma\leq 1$, the Lemma follows.
\end{proof}

\begin{lemma}
\label{computation2}
Assume $\xi$ is such that $0\leq\left|\xi\right|<\frac{1}{K^{\frac{1}{8}}}$, then
there is a constant $D$ independent of $K$ such that
\[
\int_{\left|q\right|\geq 1}\frac{1}{\left|q\right|^2}\frac{1}{\left|\xi-q\right|^2}\, d^3 q
\leq \frac{D}{K^{\frac{1}{8}}}\frac{1}{\left|\xi\right|}.
\]
\end{lemma}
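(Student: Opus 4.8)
The plan is to exploit the fact that on the domain of integration $|q| \geq 1$, while $|\xi| < K^{-\frac{1}{8}}$ is tiny, so that $q$ and $\xi - q$ are comparable in size, and then to observe that the apparently strong factor $\frac{1}{|\xi|}$ on the right-hand side is in fact harmless precisely because $|\xi|$ is small. In short, the claimed bound is weaker than a constant bound, so no refined estimate is needed.

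First I would control the denominator $|\xi - q|$. Since $|q| \geq 1$ and, for $K$ large enough, $|\xi| < K^{-\frac{1}{8}} \leq \frac{1}{2}$, the triangle inequality gives
\[
\left|\xi - q\right| \geq \left|q\right| - \left|\xi\right| \geq \left|q\right| - \frac{1}{2} \geq \frac{\left|q\right|}{2},
\]
the last step holding exactly because $|q| \geq 1$. Hence $\frac{1}{\left|\xi - q\right|^2} \leq \frac{4}{\left|q\right|^2}$ throughout the region of integration.

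Next I would bound the resulting integral by a universal constant. Passing to polar coordinates,
\[
\int_{\left|q\right|\geq 1}\frac{1}{\left|q\right|^2}\frac{1}{\left|\xi-q\right|^2}\, d^3 q \leq 4\int_{\left|q\right|\geq 1}\frac{1}{\left|q\right|^4}\, d^3 q = 16\pi =: C,
\]
where we used $\int_1^{\infty} r^{-4}\cdot 4\pi r^2\, dr = 4\pi$. In particular the integral is bounded independently of both $\xi$ and $K$. Finally, I would convert this into the stated form using the smallness of $|\xi|$: since $\left|\xi\right| < K^{-\frac{1}{8}}$ we have $\frac{1}{K^{1/8}}\cdot\frac{1}{\left|\xi\right|} > 1$, so that $\frac{C}{K^{1/8}}\cdot\frac{1}{\left|\xi\right|} > C$. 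Choosing $D = C$ yields
\[
\int_{\left|q\right|\geq 1}\frac{1}{\left|q\right|^2}\frac{1}{\left|\xi-q\right|^2}\, d^3 q \leq C \leq \frac{D}{K^{\frac{1}{8}}}\frac{1}{\left|\xi\right|},
\]
which is the claim (and the case $\xi = 0$ is trivial, the right-hand side being $+\infty$).

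I do not expect a genuine obstacle here. The only thing worth noticing is conceptual rather than technical: the factor $\frac{1}{K^{1/8}}\frac{1}{|\xi|}$ on the right is at least $1$, so the inequality asserts \emph{less} than a uniform constant bound, and hence follows from the crudest possible estimate. The sole point needing (minimal) care is that the substitution $\left|\xi - q\right| \geq \frac{\left|q\right|}{2}$ is legitimate only because $|q| \geq 1$ keeps $q$ bounded away from the small ball in which $\xi$ sits.
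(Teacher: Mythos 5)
Your proof is correct and follows essentially the same route as the paper's: the triangle inequality gives $\left|\xi-q\right|\geq \frac{\left|q\right|}{2}$ on the region $\left|q\right|\geq 1$, the resulting integral $4\int_{\left|q\right|\geq 1}\left|q\right|^{-4}\,d^3q$ is a universal constant, and the constant bound implies the stated one because $K^{\frac{1}{8}}\left|\xi\right|<1$. The only differences are cosmetic: you compute the constant explicitly and make the (correct) conceptual remark that the claimed estimate is weaker than a uniform bound, which the paper leaves implicit in its chain $\leq D\leq \frac{D}{K^{\frac{1}{8}}\left|\xi\right|}$.
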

\begin{proof}
The triangular inequality implies that $\left|\xi-q\right|\geq \frac{\left|q\right|}{2}$
as long as $\left|q\right|\geq 1$ and $\left|\xi\right|<K^{-\frac{1}{8}}$ (this of
course for $K$ large enough). Hence,
\begin{eqnarray*}
\int_{\left|q\right|\geq 1}\frac{1}{\left|q\right|^2}\frac{1}{\left|\xi-q\right|^2}\,d^3 q\leq
4\int_{\left|q\right|\geq 1}\frac{1}{\left|q\right|^4}\,d^3 q\leq
D\leq \frac{D}{K^{\frac{1}{8}}\left|\xi\right|}.
\end{eqnarray*}
\end{proof}

\begin{lemma}
\label{computation3}
Assume $\xi$ is such that $\left|\xi\right|\geq \log \log K$, then there
is a constant $D$ independent of $K$ such that
\[
\int_{0\leq\left|q\right|<8}\frac{1}{\left|q\right|^2}\frac{1}{\left|\xi-q\right|^2}\,d^3 q
\leq \frac{D}{\log \log K}\frac{1}{\left|\xi\right|}.
\]
\end{lemma}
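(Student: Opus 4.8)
The plan is to follow exactly the pattern of the proof of Lemma~\ref{computation2}: since the $q$-support of the integrand is confined to the small fixed ball $\{0\le\left|q\right|<8\}$ while $\xi$ is forced to be large, the factor $\left|\xi-q\right|^{-2}$ is essentially harmless and can be pulled out of the integral as a uniform bound, leaving behind only an integrable singularity at the origin. The extra decay factor $1/\log\log K$ then comes for free from the hypothesis $\left|\xi\right|\ge \log\log K$.

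First I would record the elementary geometric observation. For $0\le\left|q\right|<8$ the triangle inequality gives $\left|\xi-q\right|\ge\left|\xi\right|-\left|q\right|>\left|\xi\right|-8$. Because $\left|\xi\right|\ge\log\log K$ and $\log\log K\to\infty$, for $K$ large enough we have $\left|\xi\right|\ge 16$, so that $\left|\xi\right|-8\ge\tfrac12\left|\xi\right|$ and hence
\[
\frac{1}{\left|\xi-q\right|^2}\le\frac{4}{\left|\xi\right|^2}\qquad\mbox{whenever}\quad 0\le\left|q\right|<8.
\]
Inserting this pointwise bound and separating variables yields
\[
\int_{0\le\left|q\right|<8}\frac{1}{\left|q\right|^2}\frac{1}{\left|\xi-q\right|^2}\,d^3 q
\le\frac{4}{\left|\xi\right|^2}\int_{0\le\left|q\right|<8}\frac{1}{\left|q\right|^2}\,d^3 q.
\]
The remaining integral is finite and independent of $K$: in spherical coordinates the Jacobian cancels the singularity, so $\int_{0\le\left|q\right|<8}\left|q\right|^{-2}\,d^3 q=4\pi\int_0^8\,dr=32\pi$, which converges at the origin. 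Denote this constant by $C_0$.

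Finally I would trade one power of $\left|\xi\right|$ for the advertised gain. Since $\left|\xi\right|\ge\log\log K$ we have $\left|\xi\right|^{-1}\le(\log\log K)^{-1}$, and therefore
\[
\frac{4C_0}{\left|\xi\right|^2}=\frac{4C_0}{\left|\xi\right|}\cdot\frac{1}{\left|\xi\right|}\le\frac{4C_0}{\log\log K}\,\frac{1}{\left|\xi\right|},
\]
which is precisely the claimed estimate with $D=4C_0=128\pi$. There is no genuine obstacle in this argument; the only point demanding any care is the same ``$K$ large enough'' caveat already present in Lemma~\ref{computation2}, which is what guarantees $\left|\xi\right|-8\ge\tfrac12\left|\xi\right|$ and thereby licenses the triangle-inequality step.
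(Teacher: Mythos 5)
Your proof is correct and follows essentially the same route as the paper's: both use the triangle inequality to get $\left|\xi-q\right|\geq\frac{\left|\xi\right|}{2}$ on the ball $\left|q\right|<8$ (for $K$ large), pull out the factor $\frac{4}{\left|\xi\right|^2}$, bound the remaining integrable singularity by a constant, and convert one power of $\left|\xi\right|^{-1}$ into $\left(\log\log K\right)^{-1}$ via the hypothesis. Your version is in fact slightly more careful than the paper's, which states the triangle-inequality step for $\left|q\right|<4$ while integrating over $\left|q\right|<8$; your explicit ``$\left|\xi\right|\geq 16$ for $K$ large'' caveat cleans that up.
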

\begin{proof}
The triangular inequality implies that $\left|\xi-q\right|\geq \frac{\left|\xi\right|}{2}$
whenever $\left|\xi\right|\geq \log\log K$ and $\left|q\right|<4$. Using 
this we can bound,
\begin{eqnarray*}
\int_{0\leq\left|q\right|<8}\frac{1}{\left|q\right|^2}\frac{1}{\left|\xi-q\right|^2}\,d^3 q
\leq
\frac{4}{\left|\xi\right|^2}\int_{0\leq\left|q\right|<8}\frac{1}{\left|q\right|^2}\, d^3 q
\leq\frac{D}{\left|\xi\right|^2}\leq \frac{D}{\log \log K \left|\xi\right|}.
\end{eqnarray*}
\end{proof}

\subsection{Main estimates}
We shall show how to control, inductively,
each of the quantities $a_n,b_n,c_n, A_n, B_n$ and $E_n$ defined in Section \ref{notation}. 
But before we start once again, to make our writing easier, 
we shall introduce some more terminology. Fix an interval of time $\left[\tau_{n-1},\tau_n\right)$
(recall that $\tau_n=3\left(\log\log K\right)^{\frac{1}{4}}n\rho$),
if $\xi\notin \mathcal{B}_n$ is such that
\begin{itemize}
\item
$\left|\xi\right|<K^{-\frac{1}{8}}$, we call it a {\it low-low frequency}, and this set of frequencies is denoted by $lL$
\item
$K^{-\frac{1}{8}}\leq\left|\xi\right|<1$, we call it a {\it high-low frequency}, and use the notation $hL$ 
\item
$1\leq\left|\xi\right|<2$, we call it a {\it medium frequency}, and employ the notation $M$
\item
$2\leq\left|\xi\right|<\log \log K$, we call it a {\it low-high frequency}, and employ the notation $lH$
\item
$\left|\xi\right|\geq \log \log K$, we call it a {\it high-high frequency} and employ the notation $hH$.
\end{itemize}
The set of bad frequencies will be denoted by $B$ (i.e., we drop the dependence on $n$ once we have fixed an
interval of time). Let us give an example on how this notation will be used. If we are estimating
on the time interval $\left[\tau_{n-1},\tau_n\right)$, and
we write $\int_{B-lH}M_{kjl}\left(\xi\right) \hat{u}^k\left(q\right)\hat{u}^j\left(\xi-q\right)$ this actually means
\begin{eqnarray*}
&\int_{B-lH}M_{kjl}\left(\xi\right) \hat{u}^k\left(q\right)\hat{u}^j\left(\xi-q\right)&\\ 
&=& \\
&\left(\int_{q\in\mathcal{B}_n, \xi-q\in R_{2,\log\log K}}+
\int_{\xi-q\in\mathcal{B}_n, q\in R_{2,\log\log K}}\right) M_{kjl}\left(\xi\right) \hat{u}^k\left(q\right) \hat{u}^j\left(\xi-q\right);&
\end{eqnarray*}
and $\int_B$ is a shorthand for
\[
\int_{B-lL}+\int_{B-hL}+\int_{B-M}+\int_{B-B}+\int_{B-lL}+\int_{B-hH}.
\]
We are ready to state and prove
the following,
\begin{proposition}
\label{nonzerodensity}
Assume that bounds (\ref{bound1})-(\ref{bound6})
and $\delta_{j,n}\leq \sigma$ hold on $\left[\tau_{n-1},\tau_n\right)$, for all $j$. Then 
there exists a constant $\lambda$ independent of $\rho$, $n$ and $K$ 
such that if we define (here $\theta=\sigma^{\frac{1}{3}}$),
%%%%%%%%%%%%%%%%%%%%%%%%%%%%%%%%%
\begin{eqnarray*}
B_{n+1}&=&B_n+\lambda\left[a_n^2+a_nb_n+a_nA_n+b_nA_n+A_n^2+\right.\\
&&\left.
+b_nc_n+c_n^2+\theta B_n\left(a_n+b_n+c_n+A_n+B_n\right)\right]\left(\log\log K\right)^{\frac{1}{4}}\rho;
\end{eqnarray*}
%%%%%%%%%%%%%%%%%%%%%%%%%%%%%%%%%%
\begin{eqnarray*}
a_{n+1}&=&a_n+\lambda\left[a_n^2+a_nb_n+\frac{1}{K^{\frac{1}{8}}}A_n^2
+\frac{1}{K^{\frac{1}{8}}}A_nb_n+\right.\\
&&\left.+\theta B_n\left(a_n+b_n+c_n+A_n +B_n\right)
+\frac{1}{K^{\frac{1}{8}}}c_n^2+\frac{1}{K^{\frac{1}{8}}}c_nb_n\right]
\left(\frac{\log\log K}{K}\right)^{\frac{1}{4}}\rho;
\end{eqnarray*}
%%%%%%%%%%%%%%%%%%%%%%%%%
\begin{eqnarray*}
b_{n+1}&=&b_n+\lambda\left[a_n^2+a_nb_n+a_nc_n+b_n^2+b_nc_n+A_nb_n+A_nc_n\right.\\
&&\left.+c_n^2+E_n+\theta B_n\left(a_n+b_n+c_n+A_n+B_n\right)\right]
\left(\log\log K\right)\left(\log\log K\right)^{\frac{1}{4}}\rho;
\end{eqnarray*}
%%%%%%%%%%%%%%%%%%%%%%%%%%%
%\begin{eqnarray*}
%c_{n+1}=c_n+\lambda\left[b_n^2+b_nc_n+\left(\frac{b_n}{\log k}+\frac{c_n}{\log k}\right)\left(A_n+\theta B_n+a_n\right)
%+c_n^2\right]\left(\log\log k\right)^{\frac{1}{4}}\rho;
%\end{eqnarray*}
%%%%%%%%%%%%%%%%%%%%%%%%%%%
\begin{eqnarray*}
E_{n+1}&=&E_n+\lambda\left(A_n+\theta B_n\right)\left[\frac{A_n}{K^{\frac{2}{7}}}+E_n
+ X_n\right]\left(\log\log K\right)^{\frac{1}{4}}\rho\\
&&+
\theta\lambda\left(A_n+B_n\right)\left(\frac{B_n}{K^{\frac{2}{7}}}+A_n^2+X_n\right)\left(\log\log K\right)^{\frac{1}{4}}\rho\\
&&
+\lambda^2 Y_n\left(\log\log K\right)^{\frac{1}{2}}\rho^2,
\end{eqnarray*}
where 
\begin{eqnarray*}
X_n&=&
\frac{1}{K^{\frac{1}{8}}}a_nb_n+\frac{1}{K^{\frac{1}{8}}}a_nA_n+A_nb_n+
b_n^2\\
&&+\frac{1}{\log\log K}b_nc_n
+\theta B_n\left(a_n+b_n+c_n+A_n+B_n\right)+\frac{1}{\log\log K}c_n^2,
\end{eqnarray*}
\begin{eqnarray*}
Y_n&=&\theta\left(\frac{B_n}{K^{\frac{2}{7}}}+A_n^2+X_n\right)^2\\
&&+ 2\theta
\left(\frac{B_n}{K^{\frac{2}{7}}}+A_n^2+X_n\right)\left(\frac{A_n}{K^{\frac{2}{7}}}+E_n+X_n\right)
+\left(\frac{A_n}{K^{\frac{2}{7}}}+E_n+X_n\right)^2
\end{eqnarray*}
and
\[
\begin{array}{c}
\mathcal{E}_{n+1}=\\
\left\{
\xi\in R_{\omega,8}: 
\begin{array}{c}
\sum_{C\in \mathcal{P}} 
\left|\int_{q\in C,\xi-q\in R_{1,2}}
M_{kjl}\left(\xi\right) \hat{u}_{\rho}^k\left(q,t\right)\hat{u}_{\rho}^j\left(\xi-q,t\right)\, d^3 q\right|
\leq E_{n+1}\\
t\in\left[\tau_n,\tau_{n+1}\right)
\end{array}
\right\},
\end{array}
\]
then bounds
(\ref{bound2}), (\ref{bound3}), (\ref{bound4}), (\ref{bound6}) hold on $\left[\tau_{n},\tau_{n+1}\right)$.
\end{proposition}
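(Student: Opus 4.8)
The plan is to propagate each of the four bounds across a single slab $\left[\tau_n,\tau_{n+1}\right)$ by feeding the inductive hypotheses into the delayed Duhamel formula (\ref{FourierNS2}). The point of the delay cut-off $t-\rho T$ is that, since $\tau_{n+1}-\tau_n=\rho T$, for $t\in\left[\tau_n,\tau_{n+1}\right)$ the nonlinear integrand $\hat u^k_{\rho}(q,s)\hat u^j_{\rho}(\xi-q,s)$ is sampled only at times $s<\tau_n$, where bounds (\ref{bound1})--(\ref{bound6}) are already assumed to hold. The scheme is therefore effectively explicit on the slab, and each new constant will be the old one plus a correction of size $O\!\left(\rho T\cdot(\text{nonlinear bound})\right)$. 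Throughout, for a fixed output frequency $\xi$ I apply $\left|M_{kjl}(\xi)\right|\le\left|\xi\right|$ from (\ref{generalized}) to absorb the single extra factor $\left|\xi\right|^{-1}$ produced by Lemmas \ref{computation1}--\ref{computation3}; after this the convolution bound is essentially flat in $\xi$, while the time integral $\int e^{-\left|\xi\right|^2(t-s)}\,ds$ contributes $\min\!\left(\rho T,\left|\xi\right|^{-2}\right)$. To recast a flat bound into the target form $(\text{new constant})/\left|\xi\right|^2$ one multiplies by $\left|\xi\right|^2$, and the size of $\left|\xi\right|^2$ on the region under consideration is precisely what manufactures the various weights: on the low-low region $\left|\xi\right|^2\le\omega^2\le K^{-1/4}$ yields the $K^{-1/4}$ gain in $a_{n+1}$, whereas on the medium and high regions $\left|\xi\right|$ ranges up to $\log\log K$ and produces the $(\log\log K)$-powers in $b_{n+1}$, with the base weight $(\log\log K)^{1/4}\rho$ coming from the slab length $\rho T$ itself. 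The constants $A_n,c_n$ of (\ref{bound1}) and (\ref{bound5}) are treated as inductively given inputs here, since the conclusion concerns only (\ref{bound2}), (\ref{bound3}), (\ref{bound4}), (\ref{bound6}).

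First I would establish (\ref{bound2}), (\ref{bound3}), (\ref{bound4}). For $\xi$ in the relevant region I decompose the convolution according to which of the regions $lL,hL,M,lH,hH,B$ the variables $q$ and $\xi-q$ occupy, using the shorthand $\int_{B-lH}$ and $\int_{B}$ introduced above. On each of the finitely many region-pairs I replace $\left|\hat u^k(q)\right|$ and $\left|\hat u^j(\xi-q)\right|$ by the matching constant among $a_n,b_n,c_n,A_n,B_n$ over $\left|q\right|^2$ and $\left|\xi-q\right|^2$, pull those constants out, and bound the surviving $\int \frac{1}{\left|q\right|^2}\frac{1}{\left|\xi-q\right|^2}$ by Lemma \ref{computation1} (yielding the density discount $\theta=\sigma^{1/3}$ on any factor touching the bad set $B$), by Lemma \ref{computation2} (the $K^{-1/8}$ gain whenever one argument satisfies $\left|q\right|\ge 1$ while $\xi$ is low-low), or by Lemma \ref{computation3} (the $1/\log\log K$ gain whenever one argument is very high). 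Summing the region-pairs reproduces exactly the bracketed combinations defining $B_{n+1}$, $a_{n+1}$, $b_{n+1}$, and $\lambda$ is chosen large enough to dominate the fixed universal constants $c,D$ from the lemmas.

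The crux is (\ref{bound6}), the recursion for $E_{n+1}$, because the controlled quantity is itself the block-summed nonlinear term at the later time. Here I would write $\hat u(\cdot,t)$ via (\ref{FourierNS2}) as a free part $\hat u(\cdot,\tau_n)e^{-\left|\xi\right|^2(t-\tau_n)}$ plus its nonlinear correction $\mathcal N$, substitute into $\sum_{C\in\mathcal P}\bigl|\int_{q\in C,\,\xi-q\in R_{1,2}}M\hat u^k\hat u^j\bigr|$, and expand the resulting product into three bilinear pieces. The free$\times$free diagonal is controlled directly by the inductive $E_n$; the free$\times$correction cross terms give the contributions linear in $\lambda$ assembled from $X_n$, each of whose summands is one of the region-by-region convolution estimates of the previous step and therefore already carries its $K^{-1/8}$, $1/\log\log K$, or $\theta$ gain; and the correction$\times$correction piece is genuinely second order in the slab and yields the term $\lambda^2 Y_n(\log\log K)^{1/2}\rho^2$. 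Tracking the prefactors $A_n+\theta B_n$ versus $A_n+B_n$ amounts to recording whether the surviving argument lies in $R_{1,2}$ (weight $A_n$) or in the bad set (weight $B_n$, discounted by $\theta$ through Lemma \ref{computation1}). I expect this $E_{n+1}$ bookkeeping — isolating the three bilinear pieces and matching every cross term to the correct $X_n$ or $Y_n$ summand — to be the main obstacle, together with checking that the resulting $E_{n+1}\ge E_n$ is monotone; that monotonicity is what forces $\mathcal B_{n+1}\subset\mathcal B_n$, so that the good set only grows and the assumed estimates on $R_{1,2}\setminus\mathcal B_n$ persist on $R_{1,2}\setminus\mathcal B_{n+1}$, closing the induction.
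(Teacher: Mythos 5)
Your treatment of bounds (\ref{bound2}), (\ref{bound3}), (\ref{bound4}) --- decomposing the convolution by frequency regions, invoking Lemmas \ref{computation1}--\ref{computation3} for the gains $\theta$, $K^{-1/8}$ and $1/\log\log K$, and converting to the normalized form via the Duhamel factor $1-\exp\left(-\left|\xi\right|^2\left(t-\tau_n\right)\right)$ --- is exactly the paper's argument, and that part of your plan is sound. The gap is in the recursion for $E_{n+1}$. You split $\hat u_{\rho}\left(q,t\right)=\hat u_{\rho}\left(q,\tau_n\right)e^{-\left|q\right|^2\left(t-\tau_n\right)}+\mathcal N$ and assert that the free$\times$free piece ``is controlled directly by the inductive $E_n$.'' It is not: $E_n$ controls $\sum_{H}\left|\int_{H}M_{kjl}\left(\xi\right)\hat u^k\left(q,\tau_n\right)\hat u^j\left(\xi-q,\tau_n\right)d^3q\right|$, whose smallness comes from cancellation \emph{inside} each block, while in your free$\times$free term this integrand is multiplied by the $q$-dependent weight $e^{-\left(\left|q\right|^2+\left|\xi-q\right|^2\right)\left(t-\tau_n\right)}$, which can correlate with the oscillation of the integrand and destroy that cancellation. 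The obvious repair --- the weight equals $1+O\left(\rho T\right)$, so its deviation costs $O\left(\rho T\right)$ times the pointwise bounds --- is quantitatively insufficient: it produces an error of order $A_n^2\left(\log\log K\right)^{\frac{1}{4}}\rho$ per step, hence of order $\left(\log\log K\right)^{\frac{3}{4}}$ after the $\sim 1/\rho$ steps, whereas Lemma \ref{importantbounds} requires $E_n$ to stay below $K^{-\frac{1}{14}}$ up to factors $\exp\left(C\left(\log\log K\right)^{\frac{5}{2}}\right)$.

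What the paper does instead, and what is missing from your proposal, is to freeze the heat factor at a per-block reference point: fix $q_H\in H$, set $\eta_H\left(t\right)=\exp\left(-\left|q_H\right|^2\left(t-\tau_n\right)\right)$ and $\eta_{H,\xi}\left(t\right)=\exp\left(-\left|\xi-q_H\right|^2\left(t-\tau_n\right)\right)$, and write $\hat u^j_{\rho}\left(\alpha,t\right)=\eta_H\left(t\right)\hat u^j_{\rho}\left(\alpha,\tau_n\right)+\mbox{error}$. Because blocks have diameter $O\left(K^{-\frac{1}{3}}\right)$, the freezing error is $O\left(K^{-\frac{1}{3}}\left(\log\log K\right)^{\frac{1}{4}}\rho\right)$ times the pointwise bound ($B_n$ on bad frequencies, $A_n$ on good ones), and the block integral of the frozen product factors as the constant $\eta_H\eta_{H,\xi}\leq 1$ times the time-$\tau_n$ block integral, so the block sum is bounded by $E_n$ plus controllable errors. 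This freezing step is precisely what generates the terms $\frac{A_n}{K^{\frac{2}{7}}}$ and $\frac{B_n}{K^{\frac{2}{7}}}$ in the stated formula for $E_{n+1}$ (using $K^{-\frac{1}{3}}\leq K^{-\frac{2}{7}}$ for large $K$); your three-piece bookkeeping (free$\times$free $\to E_n$, cross terms $\to X_n$, correction squared $\to Y_n\rho^2$) contains no mechanism that produces them, which is a concrete sign that the decomposition as you set it up cannot close the induction.
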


\begin{proof}
Let us show how to obtain the expression for $a_{n+1}$. To this end,
let $\xi$ be a low-low frequency ($lL$). We are going to estimate $u_{\xi}$ on $\left[\tau_n,\tau_{n+1}\right)$.
In order to do this, we decompose the nonlinear term 
$\int M_{kjl}\left(\xi\right) \hat{u}^k\left(q\right)\hat{u}^j\left(\xi-q\right)$ 
into sums of interactions between the 
possible different frequencies.
Of course,
we only take into account those interactions that
can appear in the nonlinear term for a low-low frequency.
For instance, interactions such as $lL-M$, $hL-lH$, or $M-hH$ are precluded by the triangular inequality.
Hence, by considering only the possible interactions, using the assumed bounds 
(\ref{bound1}) and (\ref{bound2}), and recalling that if $t\in\left[\tau_n,\tau_{n+1}\right)$
then $t-\rho T\in \left[\tau_{n-1},\tau_{n}\right)$, with the help of 
Lemmas \ref{computation1}, \ref{computation2} and \ref{computation3}, we get
(as before, in what follows $D$ represents a generic universal constant that may change from line to line),
\[
\left|\int_{lL-lL}M_{kjl}\left(\xi\right) \hat{u}_{\rho}^k\left(q,t\right)\hat{u}_{\rho}^j\left(\xi-q,t\right)\right|\leq D a_n^2,
\]
\[
\left|\int_{lL-hL}M_{kjl}\left(\xi\right) \hat{u}_{\rho}^k\left(q,t\right)\hat{u}_{\rho}^j\left(\xi-q,t\right)\right| \leq 2D a_nb_n,
\]
\[
\left|\int_{hL-hL}M_{kjl}\left(\xi\right)\hat{u}_{\rho}^k\left(q,t\right)\hat{u}_{\rho}^j\left(\xi-q,t\right)\right|\leq D b_n^2,
\]
\[
\left|\int_{M-M} M_{kjl}\left(\xi\right) \hat{u}_{\rho}^j\left(q,t\right)\hat{u}_{\rho}^k\left(\xi-q,t\right)\right|\leq \frac{D}{K^{\frac{1}{8}}}A_n^2,
\]
\[
\left|\int_{M-lH} M_{kjl}\left(\xi\right) \hat{u}_{\rho}^k\left(q,t\right)\hat{u}_{\rho}^j\left(\xi-q,t\right)\right|\leq \frac{2D}{K^{\frac{1}{8}}}A_nb_n,
\]
\[
\left|\int_{hL-M}M_{kjl}\left(\xi\right) \hat{u}_{\rho}^k\left(q,t\right)\hat{u}_{\rho}^j\left(\xi-q,t\right)\right|\leq \frac{2D}{K^{\frac{1}{8}}}A_nb_n,
\]
\[
\left|\int_{lH-hH}M_{kjl}\left(\xi\right) \hat{u}_{\rho}^k\left(q,t\right)\hat{u}_{\rho}^j\left(\xi-q,t\right)\right|\leq \frac{2D}{K^{\frac{1}{8}}}b_nc_n,
\]
\[
\left|\int_{hH-hH} M_{kjl}\left(\xi\right)\hat{u}_{\rho}^k\left(q,t\right)\hat{u}_{\rho}^j\left(\xi-q,t\right)\right|\leq \frac{D}{K^{\frac{1}{8}}}c_n^2;
\]
and for interactions involving frequencies in the bad set we get
\[
\left|\int_{B}M_{jkl}\left(\xi\right) \hat{u}_{\rho}^j\left(q,t\right)\hat{u}_{\rho}^k\left(\xi-q,t\right)\right|\leq D\theta
B_n\left(a_n+b_n+c_n+d_n+A_n+B_n\right).
\]
Collecting all the previous estimates, we obtain the following bound for 
the nonlinear term, when $\xi$ is an $lL$ frequency,
\[
\begin{array}{c}
\left|\int M_{kjl}\left(\xi\right) \hat{u}_{\rho}^k\left(q,t\right)\hat{u}_{\rho}^j\left(\xi-q,t\right)\right|\leq
\lambda\left[a_n^2+a_nb_n+\frac{1}{K^{\frac{1}{8}}}A_n^2 \right.\\
\left.+\frac{1}{K^{\frac{1}{8}}}A_nb_n
+\theta B_n\left(a_n+b_n+c_n+A_n +B_n\right)
+\frac{1}{K^{\frac{1}{8}}}c_n^2+\frac{1}{K^{\frac{1}{8}}}c_nb_n\right],
\end{array}
\]
where $\lambda$ is a universal constant.
Now, we plug this into (\ref{FourierNS2}), integrate and use the estimate,
\[
1-\exp\left(-\left|\xi\right|^2\left(t-\tau_n\right)\right)\leq 3\left(\frac{\log\log K}{K}\right)^{\frac{1}{4}}\rho,
\]
which holds as long as  $t\in\left[\tau_n,\tau_{n+1}\right)$ and $\left|\xi\right|< K^{-\frac{1}{8}}$, to 
obtain the following estimate for $\xi$ an $lL$ frequency, 
\[
\begin{array}{c}
\left|\xi\right|^2 \left|\hat{u}_{\rho}^{k}\left(\xi,t\right)\right|\leq
a_n+\lambda\left[a_n^2+a_nb_n+\frac{1}{K^{\frac{1}{8}}}A_n^2
+\frac{1}{K^{\frac{1}{8}}}A_nb_n+\qquad\qquad\qquad\qquad\quad\right.\\
\quad\left.+\theta B_n\left(a_n+b_n+c_n+A_n +B_n\right)
+\frac{1}{K^{\frac{1}{8}}}c_n^2+\frac{1}{K^{\frac{1}{8}}}c_nb_n\right]
\left(\frac{\log\log K}{K}\right)^{\frac{1}{4}}\rho,
\end{array}
\]
with a universal constant $\lambda$.
Hence by defining $a_{n+1}$ as the lefthandside of the previous inequality, it follows that 
(\ref{bound3}) 
holds up to time $\tau_{n+1}$. The expressions for $b_n$ and $c_n$ can be obtained in a similar fashion. 
To compute the expression for $b_{n+1}$, it must be taken into account that
there are some $lH$ frequencies $\xi$ which satisfy $\left|\xi\right|=O\left(\log\log K\right)$, and this is the
reason for the extra $\log\log K$ in the expression for $b_{n+1}$.

Let us also sketch how to obtain the expression for $E_{n+1}$.
If $\alpha\in \mathcal{B}_n$, then we can bound the nonlinear term by
\begin{eqnarray*}
&\left|\int M_{kjl}\left(\alpha\right) \hat{u}_{\rho}^k\left(q,t\right)\hat{u}_{\rho}^j\left(\alpha-q,t\right)\right|&\\
&\leq&\\
 &D\left(\frac{1}{K^{\frac{1}{8}}}a_nb_n+\frac{1}{K^{\frac{1}{8}}}a_nA_n+
b_n^2+\frac{1}{\log\log K}b_nc_n+\right.&\\
&\left.+A_n^2+\theta B_n\left(a_n+b_n+c_n+A_n+B_n\right)+\frac{1}{\log\log K}c_n^2\right)&\\
&=&\\
&D\left(A_n^2+X_n\right).&
\end{eqnarray*}
Therefore, if $\alpha\in \mathcal{B}_n$, for $t\in\left[\tau_n,\tau_{n+1}\right)$, we obtain
\[
u_{\alpha}\left(t\right)=\exp\left(-\left|\alpha\right|^2\left(t-\tau_{n}\right)\right)u_{\xi}\left(\tau_{n}\right)
+\mbox{error term 1}.
\]
and this error term satisfies,
\[
\left|\mbox{error term 1}\right|\leq D
\frac{1}{\left|\alpha\right|^2}\left(A_n^2+X_n\right)\left(\log\log K\right)^{\frac{1}{4}}\rho.
\]
Now notice that if $q$ and $q'$ belong
to the same block, we have that 
\[
\left|q-q'\right|\leq (const)K^{-\frac{1}{3}},
\] 
and hence,
since $t-\tau_n\leq 3\left(\log\log K\right)^{\frac{1}{4}}$, 
\[
\exp\left(-\left|q\right|^2\left(t-\tau_n\right)\right)
=\exp\left(-\left|q'\right|^2\left(t-\tau_n\right)\right)
+O\left(K^{-\frac{1}{3}}\left(\log\log K\right)^{\frac{1}{4}}\rho\right).
\]
Let $H$ be the block to which $\alpha$ belongs and fix any $q_H\in H$. Let 
$$\eta_H\left(t\right)=\exp\left(-\left|q_H\right|^2\left(t-\tau_n\right)\right).$$
Then by our previous remark, 
\[
\hat{u}^j_{\rho}\left(\alpha,t\right)=\eta_H\left(t\right) \hat{u}^j_{\rho}\left(\alpha,\tau_{n}\right)
+\mbox{error term 2},
\]
and this error term satisfies
\begin{eqnarray*}
\left|\mbox{error term 2}\right|\leq
\frac{D}{\left|\alpha\right|^2}\left[B_nO\left(K^{-\frac{1}{3}}\right)
+A_n^2+X_n\right]\left(\log\log K\right)^{\frac{1}{4}}\rho.
\end{eqnarray*}
So for $K$ large enough we obtain the bound,
\[
\left|\mbox{error term 2}\right|\leq
\frac{D}{\left|\alpha\right|^2}\left(\frac{B_n}{K^{\frac{2}{7}}}+
A_n^2+X_n\right)\left(\log\log K\right)^{\frac{1}{4}}\rho.
\]
On the other hand, if $\alpha\in \mathcal{E}_n$, we can bound the nonlinear term as follows
\[
\begin{array}{c}
\left|\int_{q\in\mathbb{R}^3} M_{kjl}\left(\alpha\right) \hat{u}_{\rho}^k\left(q,t\right)\hat{u}_{\rho}^j\left(\alpha-q,t\right)\right|\leq
\lambda\left(\frac{1}{K^{\frac{1}{8}}}a_nb_n+\frac{1}{K^{\frac{1}{8}}}a_nA_n+A_nb_n+\right.\\
\left.\theta B_n\left(a_n+b_n+c_n+A_n+B_n\right)+\frac{1}{\log\log K}c_nb_n+
\frac{1}{\log\log K}c_n^2\right)+E_n.
\end{array}
\]
Again, as in the case of a bad frequency we have
\[
\hat{u}^j_{\rho}\left(\alpha,t\right)=\eta_H\left(t\right) \hat{u}^j_{\rho}\left(\xi,\tau_{n}\right)+
\mbox{error term 3},\quad t\in\left[\tau_n,\tau_{n+1}\right),
\]
and we can bound
\begin{eqnarray*}
\left|\mbox{error term 3}\right|\leq
\frac{1}{\left|\alpha\right|^2}\left(A_nO\left(K^{-\frac{1}{3}}\right)
+E_n+X_n\right)\left(\log\log K\right)^{\frac{1}{4}}\rho,
\end{eqnarray*}
to obtain for $K$ large enough the estimate
\[
\left|\mbox{error term 3}\right|\leq 
\frac{1}{\left|\alpha\right|^2}\left(\frac{A_n}{K^{\frac{2}{7}}}+
E_n+X_n\right)\left(\log\log K\right)^{\frac{1}{4}}\rho.
\]

For the next few lines keep in mind that $\left|q-q'\right|\leq (const) K^{-\frac{1}{3}}$, 
and hence that
\[
\left|\left(\xi-q\right)-\left(\xi-q'\right)\right|\leq (const) K^{-\frac{1}{3}}.
\]
Let 
$$\eta_{H,\xi}\left(t\right)=\exp\left(-\left|\xi-q_H\right|^2\left(t-\tau_n\right)\right),$$ 
with $q_H$ as previously defined.
Then, from the previous estimates, taking into account that $\alpha$
and $\xi-\alpha$ can be either a good or a bad frequency
and that $q$ and $q'$ belong
to the same block, it follows that the nonlinear term
satisfies the inequality
\[
\begin{array}{c}
\sum_{H\in\mathcal{P}}\left|\int_{\alpha\in H,\xi-\alpha\in R_{1,2}} 
M_{kjl}\left(\xi\right) \hat{u}_{\rho}^k\left(\alpha,t\right)\hat{u}_{\rho}^j\left(\xi-\alpha,t\right)\right|\\
\leq\\
\sum_{H\in \mathcal{P}}
\eta_H\left(t\right)\eta_{H,\xi}\left(t\right)
\left|\int_{\alpha\in H,\xi-\alpha \in R_{1,2}}
M_{kjl}\left(\xi\right) \hat{u}_{\rho}^k\left(\alpha,\tau_n\right)\hat{u}_{\rho}^j\left(\xi-\alpha,\tau_n\right)\right|
+\mbox{terms},
\end{array}
\]
where 
\[
\begin{array}{rl}
\mbox{terms}&=\\
&\lambda\left(A_n+\theta B_n\right)\left[\frac{A_n}{K^{\frac{2}{7}}}+E_n
+ X_n\right]\left(\log\log K\right)^{\frac{1}{4}}\rho+\\
&+
\theta\lambda\left(A_n+B_n\right)\left(\frac{B_n}{K^{\frac{2}{7}}}+A_n^2+X_n\right)\left(\log\log K\right)^{\frac{1}{4}}\rho+\\
&
+\lambda^2 Y_n\left(\log\log K\right)^{\frac{1}{2}}\rho^2.
\end{array}
\]
Since $\eta_H,\eta_{H,\xi}\leq 1$ for any $H$, if $\xi\in \mathcal{E}_{n}$, using (\ref{bound6}) we obtain
\[
\sum_{H\in\mathcal{P}}\left|\int_{\alpha\in H,\xi-q\in R_{1,2}} 
M_{kjl}\left(\xi\right) \hat{u}_{\rho}^k\left(q,t\right)\hat{u}_{\rho}^j\left(\xi-q,t\right)\right|\leq E_n+\mbox{other terms}
\]
and hence by defining $E_{n+1}$ as the righthandside of the previous inequality and $\mathcal{E}_{n+1}$ 
defined as in the statement,
we see that 
bound (\ref{bound6}) holds on $\left[\tau_{n},\tau_{n+1}\right)$, and also that 
$\mathcal{E}_{n}\subset \mathcal{E}_{n+1}$ 
(or which is the same, that $\mathcal{B}_{n+1}\subset \mathcal{B}_{n}$).
\end{proof}

Now we use expressions given by the previous proposition to
provide uniform bounds on $A_n, B_n, a_n, b_n$ and $c_n$ when $\rho\rightarrow 0$ (or which is the same, in $n$),
given some assumptions on $A_0, B_0, a_0, b_0, c_0$ and on a bound on $\delta_{j,n}$ (the reader should have
already noticed that $\mathcal{B}_{n}\subset \mathcal{B}_0$, and hence all is needed 
to obtain a uniform bound on $\delta_{j,n}$ is a bound on the
family of densities of $\mathcal{B}_0$). 

\begin{lemma}
\label{importantbounds}
Assume that $A_0\leq M\left(\log\log K\right)^{\frac{1}{4}}$, $B_0\leq M\left(\log\log K\right)^{\frac{1}{4}}$ ($M\geq 1$),
$E_0\leq \frac{1}{K^{\frac{1}{4}}}$, $\sigma \leq \frac{1}{K^{\frac{1}{4}}}$
and $a_0=b_0=c_0=0$. 
Then for $K$ large enough the following estimates hold as long as $n\rho\leq 1$,
\[
a_n\leq \frac{1}{K^{\frac{1}{16}}}\left(1+\rho\right)^{n-1}n\rho; 
\]
\[
b_n\leq\frac{1}{K^{\frac{1}{14}}}\left[1+12M^2\left(\log\log K\right)^{\frac{5}{2}}\rho\right]^{n-1}n\rho;
\]
\[
c_n\leq \frac{1}{K^{\frac{1}{17}}};
\]
\[
A_n\leq 2M\left(\log\log K\right)^{\frac{1}{4}};
\]
\[
B_n\leq M\left(\log\log K\right)^{\frac{1}{4}}+\left[1+4M^2\left(\log\log K\right)^{\frac{5}{2}}\right]
\left(\log\log K\right)^{\frac{1}{4}}n\rho;
\]
and
\[
E_n\leq \frac{1}{K^{\frac{1}{14}}}\left(1+12M^2\left(\log\log K\right)^{\frac{5}{2}}\rho\right)^n n\rho.
\]
\end{lemma}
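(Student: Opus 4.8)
The plan is to run one simultaneous induction on $n$ over the whole range $0\le n\rho\le1$, letting the six claimed bounds reinforce one another through the recursions of Proposition~\ref{nonzerodensity}. Since that proposition only supplies the update rules for $B_n$, $a_n$, $b_n$ and $E_n$, I would first record the two missing rules for $A_{n+1}$ (the medium frequencies $1\le|\xi|<2$) and $c_{n+1}$ (the high-high frequencies $|\xi|\ge\log\log K$) by repeating, on those two frequency ranges, the interaction bookkeeping used in the proof of Proposition~\ref{nonzerodensity}; on the medium range the good-frequency nonlinear term is controlled directly by $E_n$ together with the bad-set contribution $\theta B_n(\cdots)$, and on the high-high range every surviving interaction carries a factor $1/\log\log K$ supplied by Lemma~\ref{computation3}. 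The base case $n=0$ is then exactly the hypotheses $A_0,B_0\le M(\log\log K)^{1/4}$, $E_0\le K^{-1/4}$, $a_0=b_0=c_0=0$.

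Two elementary facts drive every estimate. The first is smallness: $\theta=\sigma^{1/3}\le K^{-1/12}$, reinforced by the structural weights $K^{-1/8}$ (from Lemma~\ref{computation2}) and $K^{-2/7}$ already present in the recursions. The second is the discrete Gr\"onwall bound $(1+r\rho)^{n}\le e^{rn\rho}\le e^{r}$, valid whenever $n\rho\le1$. Once the inductive hypotheses are inserted for the ``other'' variables, each recursion takes the scalar form $u_{n+1}\le(1+r\rho)u_n+f\rho$, and iterating gives $u_n\le(1+r\rho)^n(u_0+f\,n\rho)$, from which the stated closed forms are read off. The point is that the rates $r$ are at most $O\bigl(M^2(\log\log K)^{5/2}\bigr)$, so $e^{r}$ is sub-polynomial in $K$ and is comfortably dominated by the power-of-$K$ prefactors ($K^{-1/16}$ for $a_n$, $K^{-1/14}$ for $b_n$ and $E_n$, $K^{-1/17}$ for $c_n$) produced by the small forcing terms $f$; for $E_n$ the homogeneous part $E_0(1+r\rho)^n\le K^{-1/4}e^{r}\le K^{-1/14}$ also lands inside the stated prefactor, while the forced part supplies the $n\rho$-weighted factor. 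I would carry out the inductive step in the order $A_n,c_n\to a_n,b_n\to B_n\to E_n$, feeding each freshly updated bound into the rate and forcing of the next recursion.

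The heart of the matter, and where I expect the real work, is closing the bootstrap for $A_n$. The Gr\"onwall rates for $b_n$, $B_n$ and $E_n$ are all proportional to $A_n$ through products such as $A_nb_n$, $A_n^2$ and $(A_n+\theta B_n)E_n$, so the entire argument hinges on $A_n$ never escaping the band $[M,2M](\log\log K)^{1/4}$. Conversely, $A_n$ can grow only through forcing of size $E_n+\theta B_n(\cdots)$ plus high-frequency remainders, each of which is small \emph{only because} $E_n$ and $B_n$ are already under control. This circular dependence is precisely what the simultaneous induction is designed to break: under the inductive hypotheses the total increment of $A_n$ across the whole range is at most $\lambda(\log\log K)^{1/4}\rho\sum_{k\le n}\bigl(E_k+\theta B_k(\cdots)+\text{small}\bigr)\le\exp\bigl(O(M^2(\log\log K)^{5/2})\bigr)K^{-1/14}$, which is $\ll M(\log\log K)^{1/4}$, so the doubling budget from $M$ to $2M$ is never remotely consumed and the bound self-improves.

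A secondary obstacle is the quadratic self-interaction $\theta B_n\cdot B_n$ in the recursion for $B_{n+1}$, which \emph{a priori} threatens super-linear, blow-up-type growth; I would tame it by the same bootstrap. Under the inductive hypothesis $B_n=O\bigl(M^2(\log\log K)^{11/4}n\rho\bigr)$ one has $\theta B_n^2\le K^{-1/12}O\bigl(M^4(\log\log K)^{11/2}\bigr)$, whose accumulated contribution over $n\rho\le1$ is $O\bigl(K^{-1/12}M^4(\log\log K)^{23/4}\bigr)$, a vanishing correction to the linear-in-$n\rho$ main term, so the quadratic term is genuinely lower order and the linear bound on $B_n$ reproduces itself. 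Underlying everything is the single analytic observation that $\exp\bigl(C(\log\log K)^{5/2}\bigr)$ grows more slowly than any positive power of $K$, which is what lets each $K^{-c}$ prefactor win; this is exactly the pay-off of having fixed $T=3\log\log K$, so that the number of time steps, and hence the exponent accumulated in the Gr\"onwall factors, stays logarithmic.
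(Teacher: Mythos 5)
Your overall architecture --- a simultaneous induction in $n$, recursions of the scalar form $u_{n+1}\le(1+r\rho)u_n+f\rho$, and the key analytic fact that $\exp\left(C\left(\log\log K\right)^{\frac{5}{2}}\right)$ is sub-polynomial in $K$ --- is exactly the paper's strategy for $a_n$, $b_n$, $B_n$ and $E_n$ (the paper handles the $b_n$--$E_n$ coupling by the case split $E_n\le b_n$ versus $b_n\le E_n$, which your simultaneous induction replicates in substance). The genuine gap is in your treatment of $c_n$. On the high-high band $\left|\xi\right|\ge\log\log K$ the one-step Duhamel estimate reads
\[
\left|\xi\right|^2\left|\hat{u}_{\rho}^l\left(\xi,t\right)\right|\le
\left|\xi\right|^2\left|\hat{u}_{\rho}^l\left(\xi,\tau_n\right)\right|e^{-\left|\xi\right|^2\left(t-\tau_n\right)}
+F_n\left(1-e^{-\left|\xi\right|^2\left(t-\tau_n\right)}\right),
\]
where $F_n\le\frac{1}{2K^{1/17}}$ is the uniform bound on the nonlinear term. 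The factor $1-e^{-\left|\xi\right|^2\left(t-\tau_n\right)}$ is \emph{not} $O(\rho)$ uniformly in $\xi$: for a fixed step length it tends to $1$ as $\left|\xi\right|\to\infty$, and $\left|\xi\right|$ is unbounded on this band. Hence there is no recursion $c_{n+1}\le(1+r\rho)c_n+f\rho$ with $f$ independent of $\xi$, and accumulating per-step increments of size $F_n$ over $N=1/\rho$ steps diverges as $\rho\to0$. This is precisely why the paper abandons the Gr\"onwall form there and uses the dissipation instead: from $\frac{d}{dt}\mathrm{Re}\left(\hat{u}_{\rho}^l\right)\le-\left|\xi\right|^2\mathrm{Re}\left(\hat{u}_{\rho}^l\right)+\frac{1}{2K^{1/17}}$ (and likewise for the imaginary part) one gets a maximum-principle statement --- in discrete form $c_{n+1}\le\max\left(c_n,F_n\right)$, a convex combination rather than an accumulation --- so $c_n$ never exceeds the forcing level $K^{-1/17}$. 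That damping argument is the missing idea; without it your scheme fails on this band.

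Two smaller points of comparison. First, the paper controls $A_n$ by the same damping argument (if $\mathrm{Re}(\hat{u}_{\rho}^l)$ is near $M\left(\log\log K\right)^{\frac{1}{4}}$ it is decreasing), whereas your increment-accumulation route is legitimate there, since on $R_{1,2}$ one has $1-e^{-\left|\xi\right|^2\left(t-\tau_n\right)}\le 4\left(t-\tau_n\right)=O\left(\left(\log\log K\right)\rho\right)$ and the forcing $E_n+X_n\le K^{-1/16}$ makes the total accumulated growth negligible; so for $A_n$ (only) your route is a valid alternative. Second, your remark that the homogeneous part $E_0\left(1+r\rho\right)^n$ ``lands inside the stated prefactor'' is not literally compatible with the stated closed form because of the $n\rho$ factor: at $n=1$ with $\rho$ tiny the stated bound is of order $K^{-1/14}\rho$, which is smaller than $E_0\le K^{-1/4}$. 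What your argument actually yields (and, in effect, what the paper's case analysis yields) is a bound of the shape $E_n\le\left(1+r\rho\right)^n\left(E_0+K^{-1/14}n\rho\right)$, which is weaker than the displayed formula but is all that is used downstream, namely $E_n\le K^{-1/16}$ in Lemma \ref{longtermlemmagoodmodes}.
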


\begin{proof}
We show that the estimates are true by induction. 
We begin by showing that the bound on $a_n$ holds. The case $n=0$ is obvious.
To show the inductive step, let us rewrite the expresion for $a_{n+1}$ as
\begin{eqnarray*}
a_{n+1}= a_n\left[1+\lambda\left(a_n+b_n+\theta B_n\right)\left(\log\log K\right)^{\frac{1}{4}}\rho\right]+R_n,
\end{eqnarray*}
where 
\[
R_n=\lambda\left[\theta B_n\left(b_n+c_n+A_n+B_n\right)+\frac{1}{K^{\frac{1}{8}}}\left(c_n^2+c_nb_n\right)\right]
\left(\log\log K\right)^{\frac{1}{4}}\rho.
\]
Using the bounds on $\theta, a_n, b_n, c_n, A_n$, it is easy to show that
\[
\lambda\left(a_n+b_n+\theta B_n\right)\left(\log\log K\right)^{\frac{1}{4}}\leq 1,
\]
and
\[
R_n\leq \frac{1}{K^{\frac{1}{14}}}\left(\log\log K\right)^{\frac{1}{4}}\rho\leq \frac{1}{K^{\frac{1}{16}}}\rho.
\]
Putting all this information together yields,
\begin{eqnarray*}
a_{n+1}&\leq& a_n\left(1+\rho\right)+\frac{1}{K^{\frac{1}{16}}}\rho\\
&\leq& \frac{1}{K^{\frac{1}{16}}}\left(1+\rho\right)^{n-2}\left(n-1\right)\rho\cdot 
\left(1+\rho\right)+\frac{1}{K^{\frac{1}{16}}}\rho\\
&\leq& \frac{1}{K^{\frac{1}{16}}}\left(1+\rho\right)^{n-1}n\rho.
\end{eqnarray*}
 
Let us show the bound on $c_n$. Notice that if $\xi$ is a high-high frequency, the nonlinear term
can be bounded by
\begin{eqnarray*}
\lambda\left[b_n^2+b_nc_n+\left(b_n+\frac{c_n}{\log \log K}\right)\left(A_n+\theta B_n+a_n\right)
+c_n^2\right]\leq \frac {1}{2K^{\frac{1}{17}}};
\end{eqnarray*}
hence the real part of $\hat{u}_{\rho}^l\left(\xi,t\right)$ satisfies an equation
\[
\frac{d}{dt}Re\left(\hat{u}_\rho^l\left(\xi,t\right)\right)\leq 
-\left|\xi\right|^2 Re\left(\hat{u}_{\rho}^l\left(\xi,t\right)\right)+
\frac{1}{2K^{\frac{1}{17}}}, 
\]
which shows that $\left|Re\left(u_{\rho}^l\left(\xi,t\right)\right)\right|$ 
remains smaller than $\frac{1}{K^{\frac{1}{17}}}$.
Proceeding in the same way for the imaginary part, the result follows. 

To show the bound on $A_n$, recall that $A_0\leq M\left(\log\log K\right)^{\frac{1}{4}}$, and hence
\[
\left|Re\left(\hat{u}_{\rho}^l\left(\xi,0\right)\right)\leq M\left(\log\log K\right)^{\frac{1}{4}}\right|\quad
\mbox{and} \quad \left|Im\left(\hat{u}_{\rho}^l\left(\xi,0\right)\right)\leq M\left(\log\log K\right)^{\frac{1}{4}}\right|.
\]
On the other hand, If $\xi$ is a good frequency (i.e. not in the bad set), with $\xi\in R_{1,2}$, the nonlinear term can
be bounded by
\[
E_n+X_n\leq \frac{1}{K^{\frac{1}{16}}}.
\]
Therefore,
\[
\frac{d}{dt}Re\left(\hat{u}_{\rho}^l\left(\xi,t\right)\right)\leq 
-\left|\xi\right|^2 Re\left(\hat{u}_{\rho}^l\left(\xi,t\right)\right)+\frac{1}{K^{\frac{1}{16}}}.
\]
This shows that if $Re(\hat{u}_{\rho}^l\left(\xi,t\right))$ is close to $M\left(\log\log K\right)^{\frac{1}{4}}$, then
it is decreasing. Proceeding in the same way with the imaginary part of $\hat{u}_{\rho}\left(\xi,t\right)$, 
we conclude that
$\left|\hat{u}_{\rho}^l\left(\xi,t\right)\right|\leq 2 M\left(\log\log K\right)^{\frac{1}{4}}$.

Now we work on the bound on $B_n$. Notice that by the estimates we already have, it is easy to show that for
$K$ large,
\[
\begin{array}{c}
\lambda\left[a_n^2+a_nb_n+a_nA_n+b_nA_n+A_n^2 +b_nc_n+c_n^2 \right.\\
\left.+\theta B_n\left(a_n+b_n+c_n+A_n+B_n\right)\right]\\
\leq 1+4\left(\log\log K\right)^{\frac{1}{2}},
\end{array}
\]
and then we have that
\begin{eqnarray*}
B_{n+1}&\leq& B_n+\left(1+4M^2\left(\log\log K\right)^{\frac{1}{2}}\right)\left(\log\log K\right)^{\frac{1}{4}}\rho\\
&=&M\left(\log\log K\right)^{\frac{1}{4}}+
\left(1+4M^2\left(\log\log K\right)^{\frac{1}{2}}\right)\left(\log\log K\right)^{\frac{1}{4}}\left(n+1\right)\rho,
\end{eqnarray*}
which shows the estimate.

Bounding $E_n$ (and $b_n$) is a bit more subtle than the previous cases, since
$E_{n+1}$ depends on $b_n$ (and $b_{n+1}$ depends on $E_n$). To bound $E_n$, first we write,
\begin{eqnarray*}
E_{n+1}&=&E_n\left[1+\lambda\left(A_n+\theta B_n\right)\left(\log\log K\right)^{\frac{1}{4}}\rho\right]
+Q_n,
\end{eqnarray*}
where
\[
\begin{array}{c}
Q_n=\lambda\left(A_n+\theta B_n\right)\left[\frac{A_n}{K^{\frac{2}{7}}}
+ X_n\right]\left(\log\log K\right)^{\frac{1}{4}}\rho\\
+
\theta\lambda\left(A_n+B_n\right)\left(\frac{B_n}{K^{\frac{2}{7}}}+A_n^2+X_n\right)\left(\log\log K\right)^{\frac{1}{4}}\rho
+\lambda^2 Y_n\left(\log\log K\right)^{\frac{1}{2}}\rho^2.
\end{array}
\]
It is not difficult to produce the estimates
\[
\frac{A_n}{K^{\frac{2}{7}}}\leq \frac{1}{K^{\frac{1}{10}}},\quad
\frac{B_n}{K^{\frac{2}{7}}}\leq \frac{1}{K^{\frac{1}{10}}},\quad
\theta A_n\leq \frac{1}{K^\frac{1}{13}},\quad \theta B_n \leq \frac{1}{K^{\frac{1}{13}}},
\]
\[
X_n \leq \frac{1}{4K^{\frac{1}{13}}}+A_nb_n.
\]
Hence,
\begin{eqnarray*}
\lambda\left(A_n+\theta B_n\right)\left[\frac{A_n}{K^{\frac{2}{7}}}+X_n\right]
&\leq&
\left(2M\left(\log\log K\right)^{\frac{1}{4}}+1\right)\cdot\left(\frac{1}{K^{\frac{1}{13}}}+A_nb_n\right).
\end{eqnarray*}
By choosing $\rho\ll \frac{1}{\log\log K}$, it
is not difficult to get
\[
\lambda^2 Y_n \left(\log\log K\right)^{\frac{1}{2}}\rho^2
\leq \frac{1}{K^{\frac{1}{14}}}\rho.
\]
From this we obtain
\begin{eqnarray*}
E_{n+1}&\leq& E_n\left[1+\left(2M\left(\log\log K\right)^{\frac{1}{4}}+2\right)
\left(\log\log K\right)^{\frac{1}{4}}\rho\right]+\\
&&
\left(2M\left(\log\log K\right)^{\frac{1}{4}}+1\right)\left(2M\left(\log\log K\right)^{\frac{1}{4}}\right)b_n
\left(\log\log K\right)^{\frac{1}{4}}\rho
+
\frac{1}{K^{\frac{1}{14}}}\rho.
\end{eqnarray*}
If $E_n\leq b_n$, then we obtain the estimate
\begin{eqnarray*}
E_{n+1}&\leq& b_n\left[1+12M^2\left(\log\log K\right)^{\frac{1}{2}}\right]+\frac{1}{K^{\frac{1}{14}}}\rho\\
&\leq& \frac{1}{K^{\frac{1}{14}}}\left[1+12M^2\left(\log\log K\right)^{\frac{5}{2}}\right]^{n}\left(n\rho\right)
\left[1+12 M^2\left(\log\log K\right)^{\frac{1}{2}}\right]+\frac{1}{K^{\frac{1}{14}}}\rho,
\end{eqnarray*}
and the required estimate on $E_n$ follows. On the other hand, the same can be deduced if $b_n\leq E_n$. Finally, one can
apply a completely analogous argument to prove the estimate on $b_n$.
\end{proof}

To show that the scheme proposed converges towards a solution to the 
Generalized Navier-Stokes system, the reader should notice that 
for $\xi\in\left(0,\omega\right)^3$, $\hat{u}_{\rho}\left(\xi,t\right)$ remains uniformly
bounded on $\left(0,3\left(\log\log K\right)^{\frac{1}{4}}\right)$ independent of $\rho$. In order to see
this, define $a_{n,i}$ such that 
\begin{equation}
\label{finerbound}
\left|\hat{u}_{\rho}^l\left(\xi,t\right)\right|\leq \frac{a_{n,i}}{\left|\xi\right|^{2}} 
\quad \mbox{if}\quad \xi \in R_{\frac{1}{2^{i+1}}\omega,\frac{1}{2^i}\omega},\quad i=0,1,2,\dots,
\,\,\,t\in\left[\tau_{n-1},\tau_n\right).
\end{equation}
Observe that whenever $\xi\in \left(0,\omega\right)^3$, $\left|\xi\right|^2 t$ is quite small
($t\in\left(0,3\left(\log\log K\right)^{\frac{1}{4}}\right)$), and hence,
proceeding as in the proof of Proposition \ref{nonzerodensity} we have that if we define
\[
a_{n+1,i}=a_{n,i}+V_n \left(\frac{\omega}{2^i}\right)^2\left(\log\log K\right)^{\frac{1}{4}}n\rho,
\]
with $V_n$ given by
\[
\begin{array}{l}
\lambda\left[a_n^2+a_nb_n+\frac{1}{K^{\frac{1}{8}}}A_n^2
+\frac{1}{K^{\frac{1}{8}}}A_nb_n \right.\\
\qquad\qquad\left. +\theta B_n\left(a_n+b_n+c_n+A_n +B_n\right)
+\frac{1}{K^{\frac{1}{8}}}c_n^2+\frac{1}{K^{\frac{1}{8}}}c_nb_n\right],
\end{array}
\]
where $\lambda>0$ is a constant independent of $n$ and $K$, that bound (\ref{finerbound}) holds with
$a_{n,i}$ replaced by $a_{n+1,i}$ on $\left[\tau_n,\tau_{n+1}\right)$ (compare this statement with Proposition 
\ref{nonzerodensity}). 
From this, by an inductive argument, it can be shown then
that $$a_{n,i}=O\left(\left(\frac{\omega}{2^i}\right)^2\left(\log\log K\right)^{\frac{1}{4}}\right).$$ Recalling
the definition of $a_{n,i}$ our claim follows. From the techniques in \cite{Cortissoz} 
(see the section on regularity in \cite{Cortissoz2}), it follows that,
$\hat{u}_{\rho}\left(\xi,t\right)$ decays faster than any polynomial, and 
that the rate of decay is independent of $\rho$.
This fact, together with our previous observation shows that for any $t_0>0$, the
Sobolev 
$H^{\zeta}\left(\mathbb{R}^3\right)$-norms of the 
elements of the sequence of $\hat{u}_{\rho}\left(\xi,t\right)$ are uniformly bounded, independently of $\rho$. A
diagonal procedure together with a compactness argument then shows that the scheme converges towards a solution
of the Generalized Navier-Stokes system (\ref{FourierNS0}).

Notice then that the solution thus obtained is smooth in 
$\left(0,3\left(\log\log K\right)^{\frac{1}{4}}\right)$, since for $\left|\xi\right|\geq \log\log K$ 
by the bounds proved for the approximate solutions $u_{\rho}$, 
$u$ satisfies
\[
\sup \left|\xi\right|^2 \left|\hat{u}^l\left({\xi},t\right)\right|<\frac{1}{K^{\frac{1}{16}}} 
\quad \mbox{for} \quad t\in \left(0,3\left(\log\log K\right)^{\frac{1}{4}}\right),
\]
and the lefthandside can be made arbitrarily small if $K$ is large enough,
and our claim follows from an adaptation
of the techniques employed in \cite{Cortissoz,Cortissoz2}. The following
lemma shows that the solution is becoming small in $\mathcal{PM}^2$-norm, 
modulo a tiny bad set.

\begin{lemma}
\label{longtermlemmagoodmodes}
Let $\xi\in R_{1,2}$ and $\xi \notin \mathcal{B}_0$,
let $u$ be a solution to (\ref{FourierNS0}) obtained from
the scheme (\ref{FourierNS2}).
Then, for $\left(\log\log K\right)^{\frac{1}{4}}\leq
t<3\left(\log\log K\right)^{\frac{1}{4}}$, the following estimate holds
\[
\left|\xi\right|^2\left|\hat{u}^l\left(\xi,t\right)\right|
\leq
2\left(\log\log K\right)^{\frac{1}{4}}\exp\left(-\left(\log\log K\right)^{\frac{1}{4}}\right)+
\frac{1}{K^{\frac{1}{16}}}.
\] 
\end{lemma}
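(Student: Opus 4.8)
The plan is to reuse the mode-by-mode mechanism behind the bound on $A_n$ in Lemma \ref{importantbounds}, but now to harvest the exponential decay produced by the heat semigroup once $t$ has grown to size $(\log\log K)^{\frac{1}{4}}$. The key structural observation is that the hypothesis $\xi\notin\mathcal{B}_0$ together with the monotonicity $\mathcal{B}_n\subset\mathcal{B}_0$ (established in Proposition \ref{nonzerodensity}) guarantees that $\xi$ is a \emph{good} medium frequency at every time step, not merely at $t=0$. Consequently the small-nonlinearity estimate proved for good frequencies in $R_{1,2}$ inside the proof of Lemma \ref{importantbounds}, namely
\[
\left|\int_{q\in\mathbb{R}^3}M_{kjl}\left(\xi\right)\hat{u}^k\left(q,s\right)\hat{u}^j\left(\xi-q,s\right)\,d^3q\right|\leq E_n+X_n\leq\frac{1}{K^{\frac{1}{16}}}
\]
holds uniformly in $s$ and in $\rho$, and hence persists for the limit solution $u$.

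With this uniform control of the nonlinearity, I would apply the Duhamel representation (\ref{FourierNS0}) to the single mode $\hat{u}^l(\xi,\cdot)$ directly. Taking moduli and using $|\xi|\geq 1$ gives
\[
\left|\hat{u}^l\left(\xi,t\right)\right|\leq\left|\hat{\psi}^l\left(\xi\right)\right|\exp\left(-\left|\xi\right|^2 t\right)+\frac{1}{K^{\frac{1}{16}}}\int_0^t\exp\left(-\left|\xi\right|^2\left(t-s\right)\right)\,ds.
\]
The remaining integral is at most $|\xi|^{-2}$, so after multiplying by $|\xi|^2$ the nonlinear contribution is absorbed into the term $K^{-\frac{1}{16}}$. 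For the linear part I would use $|\xi|^2|\hat{\psi}^l(\xi)|\leq A_0$, which by the normalization of the data is at most $2(\log\log K)^{\frac{1}{4}}$ (in fact comfortably smaller, leaving room in the constant), together with $|\xi|^2\geq 1$ and $t\geq(\log\log K)^{\frac{1}{4}}$, so that $\exp(-|\xi|^2 t)\leq\exp(-(\log\log K)^{\frac{1}{4}})$. Combining the two contributions yields precisely
\[
\left|\xi\right|^2\left|\hat{u}^l\left(\xi,t\right)\right|\leq 2\left(\log\log K\right)^{\frac{1}{4}}\exp\left(-\left(\log\log K\right)^{\frac{1}{4}}\right)+\frac{1}{K^{\frac{1}{16}}}.
\]
Alternatively, in the style of the $A_n$ and $c_n$ arguments, the real and imaginary parts of $\hat{u}^l(\xi,t)$ obey $\frac{d}{dt}Re\left(\hat{u}^l\right)\leq-\left|\xi\right|^2 Re\left(\hat{u}^l\right)+K^{-\frac{1}{16}}$, and a Gronwall comparison gives the same estimate.

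The computations are routine; the only point demanding care is the passage from the approximate solutions $u_\rho$ to the limit $u$. The bound $E_n+X_n\leq K^{-\frac{1}{16}}$ was derived for the delayed scheme (\ref{FourierNS2}), so I must verify that it survives in the limit, which follows from the uniform-in-$\rho$ estimates of Lemma \ref{importantbounds} and the compactness and diagonalization argument described just before the statement. The genuinely conceptual ingredient, rather than a technical obstacle, is again the inclusion $\mathcal{B}_n\subset\mathcal{B}_0$: it is exactly what licenses the assertion that $\xi$ never leaves the good set, so that the small-nonlinearity bound is available throughout $[0,t]$ and the heat decay is not spoiled by the nonlinear term at any intermediate instant.
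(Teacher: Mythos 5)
Your proposal is correct and follows essentially the same route as the paper: the paper's own proof simply invokes the bound $E_n\leq K^{-\frac{1}{16}}$ from Lemma \ref{importantbounds} for good frequencies, plugs it into the Duhamel formula to get $\left|\xi\right|^2\left|\hat{u}_{\rho}^l\left(\xi,t\right)\right|\leq 2\left(\log\log K\right)^{\frac{1}{4}}\exp\left(-\left|\xi\right|^2 t\right)+K^{-\frac{1}{16}}\left(1-\exp\left(-\left|\xi\right|^2 t\right)\right)$, and uses $\left|\xi\right|\geq 1$, $t\geq\left(\log\log K\right)^{\frac{1}{4}}$ to conclude. You are merely more explicit about the ingredients the paper leaves implicit (the $X_n$ contribution, the inclusion $\mathcal{B}_n\subset\mathcal{B}_0$, and the passage from $u_{\rho}$ to the limit $u$), which is fine.
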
 
\begin{proof}
Notice that by Lemma \ref{importantbounds}, $E_n \leq \frac{1}{K^{\frac{1}{16}}}$ as long as $n\rho\leq 1$. 
This implies for $\xi\in R_{\omega,8}\setminus \mathcal{B}_0$ the
estimate 
\[
\left|\xi\right|^2\left|\hat{u}_{\rho}^l\left(\xi,t\right)\right|
\leq 2\left(\log\log K\right)^{\frac{1}{4}}\exp\left(-\left|\xi\right|^2 t\right)
+\frac{1}{K^{\frac{1}{16}}}\left(1-\exp\left(-\left|\xi\right|^2 t\right)\right),
\]
as long as $t<3\left(\log\log K\right)^{\frac{1}{4}}$, and the result follows.
\end{proof}

Since 
\[
2\left(\log\log K\right)^{\frac{1}{4}}\exp\left(-\left(\log\log K\right)^{\frac{1}{4}}\right)+
\frac{1}{K^{\frac{1}{16}}}\rightarrow 0 \quad \mbox{as}\quad K\rightarrow \infty,
\]
we can fix $\epsilon>0$ tiny and then take $K$ large enough so that 
\[
2\left(\log\log K\right)^{\frac{1}{4}}\exp\left(-\left(\log\log K\right)^{\frac{1}{4}}\right)+
\frac{1}{K^{\frac{1}{16}}}<\epsilon.
\]
On the other hand, by Lemma \ref{importantbounds}, $B_n\leq 10 \left(\log\log K\right)^{\frac{11}{4}}$ as long
as 
\[
t<3\left(\log\log K\right)^{\frac{1}{4}}.
\]
Then, the nonlinear term can be bounded, for any $\xi$ and any 
\[
t\in \left(\left(\log\log K\right)^{\frac{1}{4}},3\left(\log\log K\right)^{\frac{1}{4}}\right),
\]
and for $K$ large enough,
by a constant (independent of $\xi$ and $K$) times
\begin{equation}
\label{longtermnonlinear}
\epsilon^2+200\left(\log\log K\right)^{\frac{11}{2}}\theta \quad 
\mbox{(recall that $\theta=\sigma^{\frac{1}{3}}\leq \frac{1}{K^{\frac{1}{12}}}$)}.
\end{equation}
Now, by a similar argument as the one used in the proof of Lemma \ref{longtermlemmagoodmodes}, one can show 
that for $\epsilon>0$, there exists a $K$ large enough and a
\[ 
T_0\in \left(\left(\log\log K\right)^{\frac{1}{4}},3\left(\log\log K\right)^{\frac{1}{4}}\right)
\]
such that 
\[
\left\|u\left(T_0\right)\right\|_2<\epsilon
\]
for $\epsilon>0$ tiny. Indeed, to show this
it is enough to analyze the behavior of $\hat{u}^l\left(\xi,\cdot\right)$ if $\xi\in \mathcal{B}_0$. But
such a frequency number satisfies the following differential inequality on 
$\left(\left(\log\log K\right)^{\frac{1}{4}},3\left(\log\log K\right)^{\frac{1}{4}}\right)$,
\[
\frac{d}{dt}\hat{u}^l\left(\xi,t\right)\leq -\left|\xi\right|^2 \hat{u}^l\left(\xi,t\right)
+c\left(\epsilon^2+8\left(\log\log K\right)^{\frac{3}{4}}\theta\right),
\]
with $\left|\xi\right|^2\left|u^l\left(\xi,\left(\log\log K\right)^{\frac{1}{4}}\right)\right|
<10\left(\log\log K\right)^{\frac{11}{4}}$. Recalling that 
$\theta\leq \frac{1}{K^{\frac{1}{12}}}$, our claim follows easily.
Hence, by the results in \cite{CK}, for $t > T_0$, problem (\ref{FourierNS0}) admits a global regular solution; and by the
arguments given before, the solution we found up to time $T_0$ is smooth. Thus, we have shown,
\begin{theorem}
\label{lit}
Let $\gamma\in\Gamma$ be such that
the set of densities of
\[
\begin{array}{c}
\mathcal{B}_0=\\
\left\{
\xi\in R_{\omega,8}: 
\begin{array}{c}
\sum_{H\in \mathcal{P}} 
\left|\int_{q\in H}
M_{kjl}\left(\xi\right) \hat{\psi}_{\gamma}^k\left(q\right)\hat{\psi}_{\gamma}^j\left(\xi-q\right)\,d^3q\right|>
 \frac{1}{K^{\frac{1}{4}}}
\end{array}
\right\}
\end{array}
\]
is less than $\frac{1}{K^{\frac{1}{4}}}$.
Then $\psi_{\gamma}$ generates a global regular solution of (\ref{FourierNS0}).
\end{theorem}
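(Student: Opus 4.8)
The plan is to assemble the machinery already developed---Proposition~\ref{nonzerodensity}, Lemma~\ref{importantbounds}, and Lemma~\ref{longtermlemmagoodmodes}---for the specific initial datum $\psi_{\gamma}$, and then to invoke the small-data global-regularity result of \cite{CK} at a suitable time $T_0$. First I would check that $\psi_{\gamma}$ satisfies the hypotheses of Lemma~\ref{importantbounds}. Since $\hat{\psi}_{\gamma}$ is supported in $R_{1,2}\cap\mathcal{H}_{\kappa}$ and obeys $|\hat{\psi}_{\gamma}^l(\xi)|\le C(\log\log K)^{\frac{1}{4}}/|\xi|^2$ there, every frequency outside $R_{1,2}$ carries zero initial data, so $a_0=b_0=c_0=0$, while the frequencies inside $R_{1,2}$ (good or bad) are controlled by $A_0,B_0\le M(\log\log K)^{\frac{1}{4}}$ for a suitable $M\ge 1$. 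The set $\mathcal{B}_0$ in the theorem's hypothesis is precisely the bad set associated with the threshold $E_0=K^{-\frac{1}{4}}$, and that same hypothesis furnishes $\delta_{j,0}\le K^{-\frac{1}{4}}=:\sigma$ for every $j$. Hence bounds (\ref{bound1})--(\ref{bound6}) hold at $n=0$ together with the density bound $\delta_{j,0}\le\sigma$.

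Next I would run the induction of Proposition~\ref{nonzerodensity}. The point that keeps it alive is the nesting $\mathcal{B}_{n+1}\subset\mathcal{B}_n$ proved there: it forces $\mathcal{B}_n\subset\mathcal{B}_0$, so that $\delta_{j,n}\le\delta_{j,0}\le\sigma$ for every $n$ and the density hypothesis never degrades. Lemma~\ref{importantbounds} then supplies the $\rho$-independent bounds on $a_n,b_n,c_n,A_n,B_n,E_n$ for all $n$ with $n\rho\le 1$, i.e.\ up to time $3(\log\log K)^{\frac{1}{4}}$; in particular $A_n\le 2M(\log\log K)^{\frac{1}{4}}$, $E_n\le K^{-\frac{1}{16}}$ and $B_n\le 10(\log\log K)^{\frac{11}{4}}$. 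These uniform bounds, combined with the faster-than-polynomial decay inherited from \cite{Cortissoz,Cortissoz2}, give uniform $H^{\zeta}$ bounds on the $u_{\rho}$; a diagonal--compactness argument then extracts a limit $u$ solving (\ref{FourierNS0}) on $[0,3(\log\log K)^{\frac{1}{4}}]$ and smooth on the open interval, exactly as in the discussion preceding the statement.

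It then remains to locate a time $T_0$ at which $\|u(T_0)\|_2$ falls below the smallness threshold of \cite{CK}. For good frequencies in $R_{1,2}$, Lemma~\ref{longtermlemmagoodmodes} already forces $|\xi|^2|\hat{u}^l(\xi,t)|\le 2(\log\log K)^{\frac{1}{4}}\exp(-(\log\log K)^{\frac{1}{4}})+K^{-\frac{1}{16}}$ once $t\ge(\log\log K)^{\frac{1}{4}}$, a quantity tending to $0$, while the remaining classes $a_n,b_n,c_n,E_n$ are already polynomially small in $K$. For the bad modes $\xi\in\mathcal{B}_0$ the nonlinear term is bounded by a constant times $\epsilon^2+200(\log\log K)^{\frac{11}{2}}\theta$ as in (\ref{longtermnonlinear}), and since $\theta\le K^{-\frac{1}{12}}$ this is tiny; inserting it into a differential inequality of the form $\frac{d}{dt}\hat{u}^l\le-|\xi|^2\hat{u}^l+c(\epsilon^2+8(\log\log K)^{\frac{3}{4}}\theta)$ and exploiting the exponential decay over the window $((\log\log K)^{\frac{1}{4}},3(\log\log K)^{\frac{1}{4}})$ drives the bad modes below $\epsilon$ as well. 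Choosing $K$ large thus yields $\|u(T_0)\|_2<\epsilon$ for some $T_0$ in that window, whereupon \cite{CK} extends $u$ to a global regular solution for $t>T_0$; together with smoothness on $(0,T_0]$ this shows that $\psi_{\gamma}$ generates a global regular solution.

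I expect this last step to be the main obstacle, owing to the competition between growth and decay. The bound $B_n\sim(\log\log K)^{\frac{11}{4}}$ on the bad modes grows with $K$, so smallness of the $\mathcal{PM}^2$-norm is not automatic; it survives only because the density factor $\theta\le K^{-\frac{1}{12}}$ is polynomially small in $K$ while the time window of length $(\log\log K)^{\frac{1}{4}}$ is long enough for the semigroup factor $\exp(-(\log\log K)^{\frac{1}{4}})$ to beat the $(\log\log K)$-power growth---all before the uniform bounds of Lemma~\ref{importantbounds}, valid only for $n\rho\le 1$, expire. Verifying that these scales are correctly balanced, rather than any individual estimate, is the delicate point.
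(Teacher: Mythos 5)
Your proposal is correct and follows essentially the same route as the paper: the density hypothesis supplies $E_0\leq K^{-\frac{1}{4}}$ and $\sigma\leq K^{-\frac{1}{4}}$ for Lemma~\ref{importantbounds}, the induction of Proposition~\ref{nonzerodensity} with the nesting $\mathcal{B}_{n+1}\subset\mathcal{B}_n$ keeps the bounds alive, the scheme converges to a solution that is smooth on the window, and Lemma~\ref{longtermlemmagoodmodes} together with the differential inequality for the bad modes yields a time $T_0$ with $\left\|u\left(T_0\right)\right\|_2<\epsilon$, after which the small-data result of \cite{CK} gives global regularity. Your closing remark about balancing $\theta\leq K^{-\frac{1}{12}}$ against the $\left(\log\log K\right)$-power growth of $B_n$ correctly identifies the delicate point, and it is handled exactly as in the paper.
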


All that is left to show is that there exists functions taken from
the set $\mathcal{I}_{K}$ for which the hypothesis of Theorem \ref{lit} holds, and this is the purpose
of the following section. 

\section{Showing that there are good Initial Conditions.}
\label{probabilistic}

Consider the function
\[
\Psi:\,R_{\omega,8}\times \Gamma \longrightarrow \mathbf{C}^{K}
\] 
given by
\[
\Psi\left(\xi,\gamma\right)=\left(\int_{q\in H_s}
\sum_{k,j=1,2,3}M_{kjl}\left(\xi\right) \hat{\psi}^k_{\gamma}\left(\xi-q\right)
\hat{\psi}^j_{\gamma}\left(q\right)\,d^3q\right)_{s=1,2,\dots,K}.
\] 
This is clearly a measurable function.
Observe that 
for $\gamma\in \Gamma$ and $\psi_{\gamma}$ 
defined by (A)-(E) using the divergence-free condition, we can rewrite the nonlinear term as
\[
\sum_{k,j=1}^2\tilde{M}_{kjl}\left(\xi\right)\hat{\psi}_{\gamma}^k\left(\xi\right)\hat{\psi}_{\gamma}^j
=
\sum_{k,j=1}^3 M_{kjl}\left(\xi\right)\hat{\psi}_{\gamma}^k\left(\xi\right)\hat{\psi}_{\gamma}^j,
\]
i.e., the original sum in the nonlinear term runs from 1 to 3, whereas after rewriting, by expresing 
$\hat{\psi}^3_{\gamma}\left(\xi\right)$ in terms of $\hat{\psi}^1_{\gamma}$ and $\hat{\psi}^2_{\gamma}$, 
via (D)-(E), the
new sum runs from 1 to 2. 
And hence we can write,
 \[
\Psi\left(\xi,\gamma\right)=\left(\int_{q\in H_s}
\sum_{k,j=1,2}\tilde{M}_{kjl}\left(\xi\right) \hat{\psi}^k_{\gamma}\left(\xi-q\right)
\hat{\psi}^j_{\gamma}\left(q\right)\,d^3q\right)_{s=1,2,\dots,K}.
\]
Notice also, that $\tilde{M}_{kjl}\left(\xi\right)$ satisfies an estimate,
\[
\left|\tilde{M}_{kjl}\left(\xi\right)\right|\leq C_{\kappa}\quad \mbox{on} \quad R_{\omega,8}.
\]
This is implicitly used but not explicitly stated 
in the following calculations.
Also, for our probabilistic arguments, this second version of $\Psi$ is more convenient (just to make use of the independence
of the random variables involved). 
We are ready to show that, for $\xi$ fixed, with high probability the nonlinear term is small.
We warn the reader that some of the computations below are up to constants
that are uniformly bounded, so they do not affect the
order of magnitude of any of the quantities whose asymptotic behavior depends on $K$.

\begin{proposition}
\label{probabilisticestimate1}
There is a constant $\beta>0$ independent of $K$ and $\xi$ such that for all $\xi\in R_{\omega,8}$
\begin{eqnarray*}
&P\left[\gamma:\,\exists\,H\,\mbox{such that} \,
 \left|\int_{q\in H}M_{kjl}\left(\xi\right) 
\hat{\psi}_{\gamma}^k\left(\xi-q\right)\hat{\psi}_{\gamma}^j\left(q\right)\,d^3q\right|\geq 
\frac{\left(\log\log K\right)^{\frac{1}{2}}}{K^{2-\delta}}\right]&\\
&\leq \beta K\exp\left(-cK^{2\delta}\right).&
\end{eqnarray*}
\end{proposition}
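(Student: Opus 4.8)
Let me think about what this proposition is asking. We have a random function $\psi_\gamma$ whose Fourier transform on each subblock $W_{s,p}$ is $r^j_{s,p}(\gamma)\Theta^j_{K,s,p}/|\xi|^2$, with the $r$'s independent Bernoulli $\pm 1$ random variables and $|\Theta| = (\log\log K)^{1/4}$.

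For a fixed $\xi$ and a fixed block $H$ (one of $K$ blocks), the quantity
$$\int_{q\in H} \tilde M_{kjl}(\xi)\,\hat\psi^k_\gamma(\xi - q)\,\hat\psi^j_\gamma(q)\,d^3q$$
is a sum over the subblocks making up $H$ of products of the random signs. Since $\psi^k$ is constant-sign on each subblock (in the sense that $\hat\psi^k_\gamma(q) = r^k_{s,p}\Theta^k/|q|^2$ for $q\in W_{s,p}$), the integral over $H$ becomes a sum of terms, each of which is a product $r^k_{s,p}(\gamma)\,r^j_{s',p'}(\gamma)$ times a deterministic integral.

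So the core object is a quadratic form / bilinear form in the Bernoulli variables — essentially a Rademacher chaos of order 2. The magnitude of the deterministic coefficients: each subblock has volume $O(K^{-3})$, and there are $O(K^2)$ subblocks per block, and the integrand $M_{kjl}(\xi)\cdot \Theta^k\Theta^j/(|\xi-q|^2|q|^2)$ is $O((\log\log K)^{1/2})$ on $R_{\omega,8}$ (bounded away from singularities, roughly). Let me estimate: the whole integral over $H$ has $O(K^2)$ subblock-pair contributions, each of size $\sim (\log\log K)^{1/2}\cdot K^{-3}\cdot K^{-3}$... no wait, that's not right — within a single block $H$ the integral over $q\in H$ is a single integral, and the $\xi - q$ argument also ranges. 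Let me reconsider.

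Actually the structure is: fix $\xi$, fix block $H$. The integral $\int_{q\in H} \cdots d^3q$ ranges over $q \in H$. For each such $q$, $q$ lies in some subblock $W_{s,p}$ (giving sign $r^j_{s,p}$), and $\xi - q$ lies in some subblock $W_{s',p'}$ (giving sign $r^k_{s',p'}$). So the integral splits into a sum over pairs $(W_{s,p}, W_{s',p'})$ where $W_{s,p}\subset H$ and $\xi - W_{s,p}$ meets $W_{s',p'}$. Because the subblocks have size $K^{-1}$ (linear), $\xi - W_{s,p}$ essentially hits one subblock $W_{s',p'}$. So we get a sum of $O(K^2)$ terms (one per subblock in $H$), each term a product $r^j_{s,p}\,r^k_{s',p'}\cdot(\text{integral of size }(\log\log K)^{1/2}K^{-3})$.

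So the block-integral is a sum of $O(K^2)$ independent-ish signed terms (Rademacher-type), each of magnitude $\sim (\log\log K)^{1/2}K^{-3}$. Summing $K^2$ such terms with random signs: by Hoeffding/Khintchine the typical size is $\sqrt{K^2}\cdot (\log\log K)^{1/2}K^{-3} = (\log\log K)^{1/2}K^{-2}$. That matches the threshold $(\log\log K)^{1/2}/K^{2-\delta}$! Good — the threshold is $K^\delta$ standard deviations above the typical scale.

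**The proof strategy.**

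The plan is to fix $\xi$ and one block $H$, write the block-integral as a sum of products of Rademacher variables, and apply a concentration (large-deviation) inequality. Then union bound over the $K$ blocks to get the factor $K$ in front.

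\begin{itemize}
\end{itemize}

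First I would fix $\xi \in R_{\omega,8}$ and a block $H \in \mathcal{P}$, and expand the integral $\int_{q\in H}\tilde M_{kjl}(\xi)\hat\psi^k_\gamma(\xi-q)\hat\psi^j_\gamma(q)\,d^3q$ as a finite sum over subblocks. Because the linear size of each subblock is $O(K^{-1})$ and the kernel $|\xi - q|^{-2}|q|^{-2}$ is Lipschitz away from its singularities (and on $R_{\omega,8}$ we stay bounded away from $q=0$ and $q = \xi$ — or handle those subblocks separately, they contribute negligibly), the integral becomes $\sum_{p} c_p(\xi)\, r^j_{s_p,p}(\gamma)\,r^k_{s'_p,p'}(\gamma)$ where the deterministic coefficients satisfy $|c_p(\xi)| = O\bigl((\log\log K)^{1/2}K^{-3}\bigr)$ and there are $O(K^2)$ of them. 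The real and imaginary parts are each such a sum.

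Next I would recognize this as a \emph{second-order Rademacher chaos} and invoke a concentration inequality. The cleanest route is to split into the ``diagonal'' terms (where the two sign variables coincide or are deterministically linked, giving $r^2 = 1$, a constant contribution) and the genuine ``off-diagonal'' random terms. The diagonal contributes a deterministic bias of size $O((\log\log K)^{1/2}K^{-3}\cdot K^2) = O((\log\log K)^{1/2}K^{-1})$, which is comfortably below the threshold $(\log\log K)^{1/2}K^{-2+\delta}$ only if... hmm, $K^{-1}$ versus $K^{-2+\delta}$ — for $\delta < 1$ the diagonal bias of order $K^{-1}$ would actually \emph{exceed} the threshold. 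This is the point I must be careful about: I expect the diagonal terms either to vanish by a symmetry/parity argument (the $\xi_3 \geq 0$ versus $\xi_3 < 0$ construction via conditions (B) and (D)–(E), together with the oddness of $M_{kjl}$), or to be cancelled in the specific definition of the model. \textbf{Resolving the diagonal contribution is the main obstacle}, and I would need to use the precise structure of $\tilde M_{kjl}$ and the conjugate-symmetry condition (B) to show the deterministic part is actually of order at most $(\log\log K)^{1/2}K^{-2}$.

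For the off-diagonal part, which is a genuine mean-zero Rademacher chaos of order $2$ with coefficients $c_p$, I would apply the Hanson–Wright inequality (or, more elementarily, a Bernstein/Azuma martingale bound after conditioning, exploiting that conditioned on all but one variable the sum is linear in that variable). This yields, for the event that the block-integral exceeds $t$,
\[
P\bigl[|\,(\text{block-integral})| \geq t\bigr] \leq 2\exp\!\left(-c\,\min\!\Bigl(\tfrac{t^2}{\sum_p c_p^2},\ \tfrac{t}{\max_p|c_p|}\Bigr)\right).
\]
With $\sum_p c_p^2 = O\bigl((\log\log K)\,K^{-4}\bigr)$, $\max|c_p| = O((\log\log K)^{1/2}K^{-3})$, and $t = (\log\log K)^{1/2}K^{-2+\delta}$, the quadratic term gives exponent $-c\,K^{2\delta}$ (the dominant constraint, since $t/\max|c_p| \sim K^{1+\delta} \gg K^{2\delta}$ for $\delta<1$), exactly the claimed $\exp(-cK^{2\delta})$. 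Finally I would take a union bound over the $K$ blocks $H$ and over the finitely many index choices $k,j,l$ (absorbed into the constant $\beta$), producing the factor $K$ in front and completing the estimate uniformly in $\xi$ since all the coefficient bounds hold uniformly on $R_{\omega,8}$.
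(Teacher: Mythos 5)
Your overall skeleton --- expand the block integral into a quadratic expression in the Bernoulli signs with $O(K^2)$ coefficients of size $O\left((\log\log K)^{\frac{1}{2}}K^{-3}\right)$ per block, apply a concentration inequality at the threshold $t=(\log\log K)^{\frac{1}{2}}K^{-2+\delta}$ to get $\exp\left(-cK^{2\delta}\right)$, then union bound over the $K$ blocks --- is exactly the paper's strategy, and your variance bookkeeping ($\sum_p c_p^2 = O\left((\log\log K)K^{-4}\right)$, quadratic regime dominant) is correct.

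However, the step you yourself flag as unresolved --- the diagonal contribution --- is a genuine gap, and the mechanism you propose for closing it (a cancellation coming from the precise structure of $\tilde{M}_{kjl}$ together with the conjugate symmetry (B)) is not the one that works; no such cancellation is available or needed. The resolution is elementary geometric counting. A term is ``diagonal'' only if the two signs coincide, i.e.\ only if $\left(\xi - W_{s,p}\right)\cap W_{s,p}\neq\emptyset$ (the subblock meets its own image) or, because of (B), if $\left(\xi - W_{s,p}\right)\cap\left(-W_{s,p}\right)\neq\emptyset$. In the first case, $q\in W_{s,p}$ and $\xi-q\in W_{s,p}$ force $\xi = q+q'$ with $q,q'$ in the same subblock, hence $q=\frac{\xi}{2}+O\left(\frac{1}{K}\right)$: every point of such a subblock lies within $O\left(\frac{1}{K}\right)$ of $\frac{\xi}{2}$, so for each fixed $\xi$ only $O(1)$ subblocks can be diagonal, and their total contribution is $O\left((\log\log K)^{\frac{1}{2}}K^{-3}\right)$, far below the threshold --- not the $O(K^2)$-term worst case you feared. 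In the second case, $q\in W_{s,p}$ and $\xi-q\in -W_{s,p}$ force $\left|\xi\right|=O\left(\frac{1}{K}\right)$, which is impossible on $R_{\omega,8}$ since $\left|\xi\right|\geq\omega\geq\frac{1}{2}K^{-\frac{1}{8}}$. This is precisely how the paper disposes of the issue. Once this is in place, the paper additionally observes that the surviving products $r_{s,p}^k r_{s',p'}^j$ run over (up to $O(1)$ exceptions) disjoint pairs of independent signs, hence form a family of mutually independent $\pm 1$ variables, so plain Hoeffding suffices and no order-2 chaos inequality (Hanson--Wright or a martingale bound) is needed --- though your heavier machinery would also handle the off-diagonal part. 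The paper also treats separately the case where the translated subblocks do not align exactly with subblocks (each $\xi - W$ then meets at most $8$ of them), a detail you gloss over but which does not change the counting.
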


\begin{proof}
Assume first that $\xi$ is such that a each subblock of $\xi-R_{1,2}$ 
intersects a subblock of $R_{1,2}$ then it overlaps exactly with that subblock of $R_{1,2}$, i.e.,
\[
\mbox{if} \quad \left(\xi-W_{s,p}\right)\cap W_{s',p'}\neq \emptyset \quad
\mbox{then}\quad \xi-W_{s,p}= W_{s',p'}.
\]
Notice that at most $O\left(1\right)$ subblocks of $R_{1,2}$ overlap with themselves
(by $h=O\left(g\right)$ we mean $h\leq C\cdot g$ where $C$ is a constant independent of $K$). Indeed, let $W_{s,p}$ be a subblock such that
\begin{equation}
\label{superimpose}
\xi-W_{s,p}=W_{s,p},
\end{equation}
where for a set $W$ the notation $a-W$ has the usual meaning, i.e.,
\[
a-W=\left\{a-w: w\in W\right\}.
\] 
Hence, if (\ref{superimpose}) holds,
for each $\xi'\in W_{s,p}$ there is a $\xi''\in W_{s,p}$ such that $\xi'+\xi''=\xi$. But 
$\xi''=\xi'+O\left(\frac{1}{K}\right)$, and hence $\xi'=\frac{\xi}{2}+O\left(\frac{1}{K}\right)$, i.e., all elements
of the subblock must be within distance $\frac{1}{K}$ from $\frac{\xi}{2}$, and clearly only $O\left(1\right)$
subblocks satisfy this.

Write $W_{s',p'}=\xi-W_{s,p}$. We compute
\begin{eqnarray*}
\int_{q\in H_s} \hat{\psi}_{\gamma}^k\left(\xi-q\right)\hat{\psi}_{\gamma}^j\left(q\right)\,d^3 q
&=& \sum_{p} r_{s,p}r_{s',p'} \frac{\Theta_{K,s,p}^k\Theta_{K,s',p'}^j}{K^3}.
\end{eqnarray*}
Observe also that we always have
\[
\left(\xi-W_{s,p}\right)\cap \left(-W_{s,p}\right)=\emptyset, 
\]
as the diameter of the subblocks is $O\left(\frac{1}{K}\right)$, and we are assuming $\left|\xi\right|\geq \frac{1}{2}K^{-\frac{1}{8}}$.

Now, by our previous remarks, $r_{s,p}r_{s',p'}$ behave as a family of independent random variables
that take the value $\pm 1$ with probability $\frac{1}{2}$ 
(except for $O\left(1\right)$ of them), so if we consider the random variable 
\[
\sum_{p}r_{u,p}^kr_{u',p'}^j\frac{\Theta_{K,s,p}^k\Theta_{K,s',p'}^j}{K^3},
\]
recalling that there are $K^2$ subblocks per block, using the fact 
that $\left|\Theta_{K,s,p}^j\right|=\left(\log\log K\right)^{\frac{1}{2}}$, 
and since 
\[
\left|z\right|\geq a \quad \mbox{implies that either} \left|Re\left(z\right)\right|\geq\frac{a}{2}
\quad \mbox{or} \quad \left|Im\left(z\right)\right|\geq\frac{a}{2},
\]
Chernoff's bound (or Hoeffding's inequality, see Theorem 2 in \cite{Hoeff}) produces the estimate
\[
P\left[\gamma:\,\left|\int_{q\in H_s} \hat{\psi}_{\gamma}^k\left(\xi-q\right)\hat{\psi}_{\gamma}^j\left(q\right)
\,d^3 q\right|\geq \frac{\left(\log\log K\right)^{\frac{1}{2}}}{K^{2-\delta}}\right]
\leq \beta\exp\left(-cK^{2\delta}\right),
\]
where $\beta>0$ is a constant independent of $K$. Since there are $K$ blocks, the proposition follows.

In the case there is no exact overlapping of subblocks
(i.e. some subblocks intersect
but not exactly overlap), the random variable to be considered is
\[
\sum_{p, p'}r_{s,p}^kr_{s',p'}^j\Theta^k_{K,s,p}\Theta^j_{K,s',p'}\mu\left(W_{s,p}\cap W_{s',p'}\right)
\]
where $\mu\left(A\right)$ represents, as before, the Lebesgue measure of the set $A$. As
each subblock intersects at most $8$ other subblocks, and again 
at most
$O\left(1\right)$ subblocks will intersect themselves after reflection and translation, and 
$$\mu\left(W_{s,p}\cap W_{s',p'}\right)=O\left(\frac{1}{K^3}\right),$$ 
the previous argument applies and the proposition follows.
\end{proof}

Let us employ the notation $\left|\Psi\left(\xi,\gamma\right)\right|\geq \eta$ to say that there is an $s$ such that
\[
\left|\int_{q\in H_s}M_{kjl}\left(\xi\right) \hat{\psi}_{\gamma}^k\left(\xi-q\right)
\hat{\psi}_{\gamma}^l\left(q\right)\,d^3q\right|\geq \eta.
\]
From now to the end of this paper $\beta>0$ is a constant independent of $K$ that may change from line to line.
Now, we have from the previous proposition that
\[
\frac{1}{\mu\left(R_{\omega,8}\right)}
\int_{R_{\omega,8}}\int_{\left\{\gamma:\,\left|\Psi\left(\xi,\gamma\right)\right|
\geq \frac{\left(\log\log K\right)^{\frac{1}{4}}}{K^{2-\delta}}\right\}}\,dP\left(\gamma\right)d\mu\left(\xi\right)
\leq \beta K\exp\left(-cK^{2\delta}\right).
\]
Hence, from Fubini's Theorem we obtain,
\[
\frac{1}{\mu\left(R_{\omega,8}\right)}
\int_{\left\{\left(\xi,\gamma\right):\, 
\left|\Psi\left(\xi,\gamma\right)\right|\geq \frac{\left(\log\log K\right)^{\frac{1}{2}}}{K^{2-\delta}}\right\}} 
\,d\mu\left(\xi\right)\leq
\beta K\exp\left(-cK^{2\delta}\right).
\]

From the previous estimates the following can be deduced. Let 
\[
\mathcal{U}=\left\{\gamma:\, \mu\left\{\xi:\, 
\left|\Psi\left(\xi,\gamma\right)\right|\geq \frac{\left(\log\log K\right)^{\frac{1}{2}}}{K^{2-\delta}}\right\}
\geq \frac{1}{K^{\delta}}\right\},
\]
and write
\[
\mathcal{A}=
\left\{\left(\xi,r\right):\, \left|\Psi\left(\xi,\gamma\right)\right|
\geq \frac{\left(\log\log K\right)^{\frac{1}{2}}}{K^{2-\delta}}\right\}.
\]
Then $P\left[\mathcal{U}\right]$ is less than $\beta K^{1+\delta}\exp\left(-cK^{2\delta}\right)$, for otherwise,
by Cavalieri's principle, 
\[
\frac{1}{\mu\left(R_{\omega,8}\right)}
\int_{\mathcal{A}}\, d\mu\left(\xi\right)\times dP\left(\gamma\right)
\geq
\int_{\mathcal{U}}
\frac{1}{K^{\delta}}\, dP\left(\gamma\right)
> \beta K\exp\left(-cK^{2\delta}\right),
\]
which is a contradiction.

Summarizing we have shown the following
\begin{proposition}
Let
\[
\begin{array}{c}
\mathcal{B}_{0,\delta}=\\
\left\{
\xi\in R_{\omega,8}: 
\begin{array}{c}
\sum_{H\in \mathcal{P}} 
\left|\int_{q\in H}
M_{kjl}\left(\xi\right) \hat{\psi}_{\gamma}^k\left(q\right)\hat{\psi}_{\gamma}^j\left(\xi-q\right)\,d^3q\right|\geq
 \frac{\left(\log\log K\right)^{\frac{1}{2}}}{K^{1-\delta}}
\end{array}
\right\}.
\end{array}
\]
There are constants $\beta>0$ and $c>0$ independent of $K$, such that
\[
P\left[\gamma: \,\frac{\mu\left(\mathcal{B}_{0,\delta}\cap R_{\omega,8}\right)}{\mu\left(R_{\omega,8}\right)}\leq K^{-\delta}\right]
\geq 
1-\beta K^{1+\delta}\exp\left(-c K^{2\delta}\right),
\] 
i.e., with high probability $\mathcal{B}_{0,\delta}$ has small density.
\end{proposition}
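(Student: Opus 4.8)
The plan is to read off the density statement from the single-frequency bound of Proposition~\ref{probabilisticestimate1} by a first-moment argument in the frequency variable, after reducing the sum over blocks to a maximum over blocks.

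First I would record the purely deterministic reduction that links the two thresholds. For a fixed $\gamma$, the region $R_{\omega,8}$ is partitioned into $K$ blocks, so if for a given $\xi$ \emph{every} block satisfies $\left|\int_{q\in H} M_{kjl}(\xi)\hat{\psi}^k_\gamma(\xi-q)\hat{\psi}^j_\gamma(q)\,d^3q\right| < (\log\log K)^{1/2}/K^{2-\delta}$, then summing over the $K$ blocks yields a total strictly below $K\cdot (\log\log K)^{1/2}/K^{2-\delta} = (\log\log K)^{1/2}/K^{1-\delta}$. Contrapositively, $\xi\in\mathcal{B}_{0,\delta}$ forces at least one block to reach the per-block threshold, i.e.
\[
\mathcal{B}_{0,\delta}\subseteq \left\{\xi\in R_{\omega,8}:\, |\Psi(\xi,\gamma)|\geq (\log\log K)^{1/2}/K^{2-\delta}\right\}=:S_\gamma .
\]
Thus it suffices to bound the density of $S_\gamma$ with high probability, and $S_\gamma$ is exactly the slice controlled by Proposition~\ref{probabilisticestimate1}.

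Next I would pass to the product space $R_{\omega,8}\times\Gamma$. Since $\Psi$ is measurable, I integrate the pointwise bound of Proposition~\ref{probabilisticestimate1} over $\xi$ and apply Fubini to the set $\mathcal{A}=\{(\xi,\gamma): |\Psi(\xi,\gamma)|\geq (\log\log K)^{1/2}/K^{2-\delta}\}$, obtaining $(\mu\times P)(\mathcal{A})/\mu(R_{\omega,8})\leq \beta K\exp(-cK^{2\delta})$. The fibre of $\mathcal{A}$ over a fixed $\gamma$ is precisely $S_\gamma$, so this says that the \emph{average} over $\gamma$ of the density of $S_\gamma$ is at most $\beta K\exp(-cK^{2\delta})$. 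A Markov (Cavalieri) inequality in $\gamma$ then bounds the probability that this density exceeds $K^{-\delta}$: the set $\mathcal{U}=\{\gamma:\,\mu(S_\gamma)/\mu(R_{\omega,8})\geq K^{-\delta}\}$ satisfies $K^{-\delta}P[\mathcal{U}]\leq \beta K\exp(-cK^{2\delta})$, that is, $P[\mathcal{U}]\leq \beta K^{1+\delta}\exp(-cK^{2\delta})$. Combining with the inclusion above, for every $\gamma\notin\mathcal{U}$ the density of $\mathcal{B}_{0,\delta}$ is at most $K^{-\delta}$, so $P[\mu(\mathcal{B}_{0,\delta})/\mu(R_{\omega,8})\leq K^{-\delta}]\geq P[\mathcal{U}^c]$, which gives the claimed lower bound.

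The substantive content lives entirely inside Proposition~\ref{probabilisticestimate1} (the Hoeffding/Chernoff control of the randomized bilinear sum for a single frequency), which I am free to assume; granted that, the remaining steps are routine Fubini and Markov manipulations. The only points requiring care are the bookkeeping that the per-block threshold $K^{2-\delta}$ multiplied by the number of blocks $K$ reproduces exactly the sum threshold $K^{1-\delta}$ defining $\mathcal{B}_{0,\delta}$, and the measurability of $\Psi$ needed to invoke Fubini — both already settled in the text. I do not anticipate a genuine obstacle beyond checking that $\beta$ and $c$ stay uniform in $K$, which they do since they are inherited unchanged from Proposition~\ref{probabilisticestimate1}.
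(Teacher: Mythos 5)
Your proof is correct and follows essentially the same route as the paper: the paper likewise integrates Proposition \ref{probabilisticestimate1} over $\xi$, applies Fubini on $R_{\omega,8}\times\Gamma$, and uses the Cavalieri/Markov inequality in $\gamma$ to bound $P\left[\mathcal{U}\right]$ by $\beta K^{1+\delta}\exp\left(-cK^{2\delta}\right)$, with your explicit per-block-to-sum reduction (the factor $K$ coming from the number of blocks) left implicit in the paper's passage from the per-block threshold $\left(\log\log K\right)^{\frac{1}{2}}/K^{2-\delta}$ to the sum threshold $\left(\log\log K\right)^{\frac{1}{2}}/K^{1-\delta}$. The only slip, immaterial to the argument, is that the partition $\mathcal{P}$ is of $R_{1,2}$ (the support of $\hat{\psi}_{\gamma}$), not of $R_{\omega,8}$.
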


By taking $\delta = \frac{11}{16}$, and taking into account Theorem \ref{lit}, the main result of this paper is proved.
Indeed, all is left to show is that when we take $\delta=\frac{11}{16}$ in the previous proposition, then the
family of densities of $\mathcal{B}_0:=\mathcal{B}_{0,1}$ is less than $\frac{1}{K^{\frac{1}{4}}}$.
Taking into account that $\mu\left(R_{\omega,2\omega}\right)=(const)K^{-\frac{3}{8}}$, this follows from the estimates
\[
\delta_{j}=\frac{\mu\left(\mathcal{B}_0\cap R_{2^j\omega,2^{j+1}\omega}\right)}{\mu\left(R_{2^j\omega,2^{j+1}\omega}\right)}
\leq \frac{\mu\left(\mathcal{B}_0\right)}{\mu\left(R_{\omega,2\omega}\right)}
\leq (const)\frac{1}{K^{\frac{11}{16}}}\times K^{\frac{3}{8}}<\frac{1}{K^{\frac{1}{4}}},
\]
when $K$ is large enough.

\section{On certain generalities}

From the proof it can be seen that we can take as the support of the Fourier transform any
compact set $R$ which is symmetric with respect to the origin (i.e. $R=-R$). On the other hand, we can take
a partition of $R\cap \mathcal{H}_{\kappa}\cap\mathbb{H}_{\geq 0}$ in blocks and subblocks and then
extend it to $R\cap \mathcal{H}_{\kappa}$, just as we did in the case of $R_{1,2}$. There
are restrictions on what type of sets blocks and subblocks can be:

-Blocks must be Lebesgue measurable disjoint sets, as well as subblocks, and the diameter of the blocks 
must be of the order of $K^{-\frac{1}{3}}$.  

-Subblocks must satisfy a condition akin to a finite intersection property. There must be a constant $C$
independent of $K$ such that given $\xi$ then at most $C$ subblocks $W$ intersect with $\xi-W$
(i.e., at most $C$ subblocks intersect with themselves after reflection and translation), and also,
given any subblock $W$ and $\xi\in \mathbb{R}^3$, $\xi-W$ intersects at most $C$ subblocks $W'$ (here $W\neq W'$).

Having defined the partition of the compact set $R$, we choose our random variables,
\[
r^j_{s,p}:\,\Gamma\longrightarrow \left[-1,1\right].
\]
This random variables should be independent and have zero mean -{\it it is not required of them to be identically distributed}.
Again we choose complex numbers $\left|\Theta^j_{K,s,p}\right|=\left(\log\log\right)^{\frac{1}{4}}$, $j=1,2$, and define
the initial data $\psi_{\gamma}$ just as we did before. 

The conditions imposed on blocks and 
subblocks guarantees that the mean and the variance of the random variable
\[
\sum_{s,p} r^k_{s,p}r^j_{s',p'}\Theta_{K,s,p}^k \Theta_{K,s',p'}^j
\]
are at most $O\left(\frac{\left(\log\log K\right)^{\frac{1}{2}}}{K^3}\right)$
and $O\left(\frac{\log\log K}{K^4}\right)$ respectively.  

At this point, we can recur to Chebyshev's inequality, which gives, with the same notation as in
proposition \ref{probabilisticestimate1}, 
the estimate
\begin{eqnarray*}
&P\left[\gamma:\,\exists\,H\,\mbox{such that} \,
 \left|\int_{q\in H}M_{kjl}\left(\xi\right) 
\hat{\psi}_{\gamma}^k\left(\xi-q\right)\hat{\psi}_{\gamma}^j\left(q\right)\,d^3q\right|\geq 
\frac{\left(\log\log K\right)^{\frac{1}{2}}}{K^{\frac{3}{2}-\delta}}\right]&\\
&\leq \frac{\beta}{K^{2\delta}}.&
\end{eqnarray*}
From here, another version Theorem \ref{maintheorem1} follows directly from
a similar argument to the given above. To be more specific, it can be shown,
for the case discussed in this section, that
 \[
P\left[\gamma:\, \psi_{\gamma}\,\, \mbox{generates a regular global solution to
(\ref{FourierNS0})}\right]\geq 1-\frac{c}{K}.
\]
where $c>0$ is a constant independent of $K$.

%\begin{acknowledgements}
%If you'd like to thank anyone, place your comments here
%and remove the percent signs.
%\end{acknowledgements}

% BibTeX users please use one of
%\bibliographystyle{spbasic}      % basic style, author-year citations
%\bibliographystyle{spmpsci}      % mathematics and physical sciences
%\bibliographystyle{spphys}       % APS-like style for physics
%\bibliography{}   % name your BibTeX data base

% Non-BibTeX users please use

\end{document}